\newtheorem{teo}{Theorem}
\newtheorem{pro}{Proposition}
\newtheorem{lem}{Lemma}
\newtheorem{cor}{Corollary}
\newtheorem*{rem}{Remark}
\title{Moments of the free Jacobi process: a matrix approach}
\author[N. Demni]{Nizar Demni}
\address{Aix-Marseille Univ., CNRS, I2M, UMR 7373, 39, rue
Fr\'ed\'eric Joliot Curie, 13453 Marseille Cedex 13, France}
\email{nizar.demni@univ-amu.fr}
\author[T. Hamdi]{Tarek Hamdi}
\address{Department of Management Information Systems  \\ College of Business and Economics\\ Qassim University  \\ Saudi Arabia
and Laboratoire d'Analyse Math\'ematiques et applications LR11ES11 \\ Universit\'e de Tunis El-Manar \\ Tunisie}
\email{t.hamdi@qu.edu.sa}
\keywords{Hermitian Jacobi process, free Jacobi process, Method of moments,  hypergeometric functions, Carlitz summation formula, complete ordinary Bell polynomial, generalized Chu-Vandermonde identity}
\begin{document}

\maketitle

\begin{abstract}
We compute the large size limit of the moment formula derived in \cite{DHS} for the Hermitian Jacobi process at fixed time. Our computations rely on the polynomial division algorithm which allows to obtain cancellations similar to those obtained in Lemma 3 in \cite{Bia}. 
In particular, we identify the terms contributing to the limit and show they satisfy a double recurrence relation. We also determine explicitly some of them and revisit a special case relying on Carlitz summation identity for terminating $1$-balanced ${}_4F_3$ functions taken at unity. 
\end{abstract}
\tableofcontents
\section{Introduction, reminder and main result}
\subsection{Random matrices and matrix-valued stochastic processes} 
The study of moments of random matrices and of their large size limits was initiated by Wigner and Dyson in their analysis of spectra of heavy nuclei (see \cite{For}). For unitarily-invariant self-adjoint and unitary matrix models, these limits fit the moments of spectral distributions of self-adjoint and unitary operators in a non commutative probability space endowed with a trace (see e.g. \cite{Hia-Pet}). Famous and widely studied examples include Wigner semi-circle, Marchenko-Pastur, Wachter and Haar distributions.    
The third one, named after K. W. Wachter who introduced it more than four decades ago, includes up to a variable change the so-called Kesten-McKay distribution arising in the study of symmetric random walks on free groups (\cite{Kes}) and of large random regular graphs (\cite{McKay}). It also arises as the limiting empirical distribution of matrices drawn from the Jacobi unitary ensemble (\cite{Cap-Cas}, \cite{Col}) and an elegant expression of its moments may be found in \cite{CDLV}.    

Dynamical versions of random matrices are, when they exist, matrix-valued stochastic processes. For instance, Gaussian ensembles and Wishart matrices correspond to marginals (fixed time) matrix Brownian motions and Wishart processes respectively. While matrices drawn from the classical compact Lie groups, known also as Dyson circular models, show up in the limit as time goes to infinity of Brownian motions on the aforesaid groups. In particular, the limiting empirical distribution of the Brownian motion on the unitary group was determined in \cite{Bia} relying on the expansion of the heat kernel in the Schur polynomial basis. Subsequently, traces of powers of Brownian motions in classical Lie group were thoroughly studied in \cite{Lev} and \cite{Dah} making use of algebraic-combinatorics and representation theory.  
 
\subsection{The Hermitian and free Jacobi processes} 
The dynamical version of the Jacobi unitary ensemble is the so-called complex Hermitian Jacobi process $(J_t)_{t \geq 0}$ whose marginal distribution converges weakly to it as $t \rightarrow +\infty$. This matrix-valued stochastic process is constructed as the radial part of an upper-left corner of a Brownian motion in the unitary group (see \cite{Del-Dem} and references therein). The stochastic properties of its eigenvalues process and of their $\beta$-extension were studied in \cite{Dem0}, while the low and high temperature behaviors of these particle systems were determined in \cite{Voi} and \cite{TT} respectively.
   
In the large size limit, the Hermitian Jacobi process converges (when suitably normalized) in the sense of mixed moments to the free Jacobi process introduced in \cite{Dem}. In particular, at any fixed time $t > 0$, the limiting moment sequence of the former 
uniquely determines the spectral distribution of the latter in a non commutative probability space. Denoted $\mu_t^{(\lambda, \theta)}$, this compactly-supported probability measure depends on two parameters $\lambda \in ]0,1], \theta \in ]0,1[,$ which respectively encodes in the large size limit the rectangular shape of the corner out of which $J_t$ is built and its magnitude regarding the size of the underlying Brownian motion in the unitary group (see below). To the best of our knowledge, the moments of  $\mu_t^{(\lambda, \theta)}$ are explicit only in the particular case corresponding to $\lambda = 1$ and $\theta = 1/2$. However, the Lebesgue decomposition of $\mu_t^{(\lambda, \theta)}$ was determined in \cite{Ham}. The corresponding absolutely continuous part is encoded by the inverse (in a compositional sense) of the solution to a radial Loewner equation. Specifically, this inverse extends continuously to the unit circle \cite{Ham1}. Unfortunately, in general, the explicit expressions for the moments and the density remains elusive.



Motivated by quantum information theory and especially by dynamical random quantum states, the moments of $\mu_t^{(1,1/k)}, k \geq 2,$ has been further analyzed in \cite{Dem-Ham} relying on enumeration techniques. Though this analysis reveals a quite striking combinatorial structure played by the Catalan triangle (\cite{Sha})
\begin{equation}\label{Catalan}
\binom{2n}{n-h}, \quad 0 \leq h \leq n,
\end{equation}
independently of $k$, it involves the moments of a non normal bounded operator which do not seem to have any simple form unless $k=2$. Besides, unless $h=0$, the presence of the above binomial coefficients in relation with the moments of the free Jacobi process remains till now unexplained. As to the binomial coefficient
\begin{equation*}
\binom{2n}{n},
\end{equation*}
it arises in the moment sequence of the stationary (arcsine) distribution $\mu_{\infty}^{(1, 1/2)}$ and counts the number of non crossing partitions of type B (\cite{Rei}). 
\subsection{Moments of the Hermitian Jacobi process}
In order to get more insight into both the combinatorial and analytical structures of the moment sequence of $\mu_t^{(\lambda, \theta)}$, in particular into the occurrence of the above binomial coefficients, we pursued a matrix approach in \cite{DHS} where we proved a formula for the moments $\mathbb{E}\left(\textrm{tr}[(J_{t})^n]\right), n \geq 1,$. The latter, recalled below, is given by a sign alternating double sum over hooks of fixed weight $n$ and those contained in. It may be also expressed through terminating 
and $1$-balanced ${}_4F_3$ hypergeometric functions taken at unity (\cite{Bai}). However, computing the large size limit is still challenging due to the presence of alternating signs and to the complicated form of the summands as well.

At this stage, let us provide more details on the moment formula derived in \cite{DHS}. To this end, consider a $d \times d$ Brownian motion on the unitary group and its upper-left corner of shape $m \times p$, where without loss of generality we assume 
$p \geq m$. Then $J_t$ is the $m \times m$ radial part of this corner. Now, recall from \cite{DHS} the parameters
\begin{equation*}
r:= p-m,  \quad q:=d-p, \quad s:= q-m,
\end{equation*}
and denote
\begin{equation}\label{MomSeq}
M_{n}^{(r,s,m)}(t):= \mathbb{E}\left(\textrm{tr}[(J_{t/d})^n]\right),
\end{equation}
the time-normalized $n$-th moment of $J_{t/d}$. Given a hook $\alpha =  (n-k, 1^k)$, denote further $|\alpha| = n$ its weight, $\alpha_1 = n-k$ its largest component, and $\l(\alpha) = k+1$ its length. 
Then, Theorem 1 in \cite{DHS} reads: 
\begin{equation}\label{FinalForm}
M_{n}^{(r,s,m)}(t) = M_n^{(r,s,m)}(\infty) + \sum_{\substack{\alpha \, \, \textrm{hook} \\ |\alpha| = n, l(\alpha) \leq m}} (-1)^{n-\alpha_1}  \sum_{\substack{\tau \subseteq \alpha \\ \tau \neq \emptyset}}
	\frac{e^{-\nu_\tau(r,s,m) t/d} \, {V}_{\alpha_1, \tau_1}^{r,s,m} \, U_{l(\alpha), l(\tau)}^{r,s,m} }{(r+s+\tau_1+2m-l(\tau)) (\tau_1+l(\tau)-1)}.
\end{equation}
In \eqref{FinalForm}, the notation $\tau \subseteq \alpha$ stands for the containment of the corresponding Young diagrams (so $\tau$ is a hook as well and $\tau_1$ is its largest component), 
\begin{align}\label{Eigen}
	\nu_{\tau}(r,s,m) :=\sum_{i=1}^m\tau_i(\tau_i+r+s+1+2(m-i)),
\end{align}
\begin{align}\label{V}
	{V}_{\alpha_1, \tau_1}^{(r,s,m)} := \frac{(d+2\tau_1-1)\Gamma(d+\tau_1)\Gamma(\alpha_1+m)\Gamma(p+\alpha_1)\Gamma(q+\tau_1)}{(\alpha_1-\tau_1)!\Gamma(d+\alpha_1+\tau_1)\Gamma(p+\tau_1)\Gamma(\tau_1)},
\end{align}
\begin{align}\label{U}
	U_{l(\alpha), l(\tau)}^{(r,s,m)} := \frac{(d+1-2l(\tau))\Gamma(d-l(\alpha)-l(\tau)+1)\Gamma(p-l(\tau)+1)}{(l(\alpha)- l(\tau))!(l(\tau)-1)!\Gamma(m-l(\alpha)+1)\Gamma(p-l(\alpha)+1)\Gamma(q-l(\tau)+1)\Gamma(d-l(\tau)+1)},
\end{align}
and $M_n^{(r,s,m)}(\infty)$ is the $n$-th moment of the Jacobi unitary ensemble. We refer the reader to \cite{Kra} for an expression of the moment sequence $(M_n^{(r,s,m)}(\infty))_{n \geq 0}$ and for its asymptotic analysis based on terminating ${}_4F_3$ hypergeometric functions taken at unity.  

Since any hook $\tau \subseteq \alpha, \tau \neq \emptyset,$ is a partition of the form
\begin{equation*}
\tau := (h-j,1^j), \quad 0 \leq j \leq h-1, \quad 1 \leq h \leq n,
\end{equation*}
then the double sum displayed in the RHS of \eqref{FinalForm} may be written as (\cite{DHS}, section 4.2): 
 \begin{multline}\label{HG}
\sum_{h=1}^n \sum_{j=0}^{h-1}\frac{(-1)^{n-h+j}e^{-\nu_{h,j(d)} t/d}}{h(n-h)!(h-j-1)!j!}  \frac{(2m+2h-2j-1)(d-2j-1)\Gamma(p-j)\Gamma(q+h-j)\Gamma(d+h-j)}{(d+h-2j-1)\Gamma(q-j)\Gamma(d-j)}
\\ \frac{\Gamma(h-j+m)\Gamma(d-n+h-2j-1)}{\Gamma(m+h-n-j)\Gamma(p+h-n-j)\Gamma(d+2h-2j)} 
{}_4F_3\left(\begin{matrix} -(n-h), m+h-j, p+h-j, d-n+h-2j-1; \\ m-n+h-j, p-n+h-j, d+2h-2j; \end{matrix}, 1\right),
\end{multline}
where
\begin{align}\label{Eigen1}
\nu_{h,j}(d) := \nu_{(h-j,1^j)}(r,s,m) =  dh + (h-j)(h-j-1) - j(j+1) = dh +h(h-2j-1).
\end{align}
Here, ${}_4F_3$ is a terminating and $1$-balanced (i.e. the sum of the bottom row parameters equals one plus the sum of the top row parameters) hypergeometric functions taken at unity. Various parameter transformations and reduction formulas exist 
for these functions (\cite{Bai}). For instance, generalized hypergeometric series whose row and bottom parameters differ by integers admits simpler representations (see \cite{Mil-Par} and references therein). Recently, a reduction formula proved in \cite{Chu} (Theorem 2.1) reduces a $1$-balanced terminating ${}_4F_3(1)$ to a terminating (non necessarily balanced) ${}_3F_2(1)$. In particular, it includes the celebrated Carlitz Summation formula for a special choice of the parameter set (\cite{Abi}). 
 As we shall see below, the latter allows to simplify considerably \eqref{HG} when choosing $p(m) = m+(1/2), d(m)=2m,$ which in turn opens the way to retrieve the moment sequence of $\mu_t^{(1,1/2)}$ derived in \cite{DHH}. 
Note in passing that though this choice does not correspond to the Hermitian Jacobi process with integer parameters, the eigenvalue process  still exists as the unique strong solution of a vector-valued stochastic differential equation with a singular drift (\cite{Gra-Mal}, Corollary 6.7.). 

\subsection{Main result}
More generally, following another route inspired by the seminal paper \cite{Bia}, we shall prove the following theorem which is the main result of our paper: 
\begin{teo}\label{Main}
Write $m=(\lambda \theta) d, p = \theta d$. Then for any $ n\geq 1$, there exist real numbers $\left(c_{n,h,l}^{(\lambda, \theta)}\right)_{0 \leq l \leq h-1 \leq n-1}$ such that 
\begin{equation*}
\lim_{d \rightarrow +\infty} \frac{1}{m}\sum_{\substack{\alpha \, \, \textrm{hook} \\ |\alpha| = n, l(\alpha) \leq m}}  \sum_{\substack{\tau \subseteq \alpha \\ \tau \neq \emptyset}}
	\frac{(-1)^{n-\alpha_1} e^{-\nu_\tau(r,s,m) t/d} \, {V}_{\alpha_1, \tau_1}^{r,s,m} \, U_{l(\alpha), l(\tau)}^{r,s,m} }{(r+s+\tau_1+2m-l(\tau)) (\tau_1+l(\tau)-1)} = 
	\sum_{h=1}^n(-1)^{h-1}\frac{e^{-ht}}{h}\sum_{l=0}^{h-1} \frac{(2ht)^{l}}{l!} c_{n,h,l}^{(\lambda,\theta)}.
\end{equation*}
\end{teo}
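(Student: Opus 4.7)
The plan is to start from the hypergeometric reformulation \eqref{HG}, in which the inner double sum has been re-indexed by $(h,j)$ with $1 \leq h \leq n$ and $0 \leq j \leq h-1$. Using \eqref{Eigen1}, the eigenvalue splits as $\nu_{h,j}(d) = dh + h(h-2j-1)$, so
\begin{equation*}
e^{-\nu_{h,j}(d)t/d} = e^{-ht}\sum_{l \geq 0} \frac{(-h)^l (h-2j-1)^l\, t^l}{l!\, d^l},
\end{equation*}
which isolates the exponential decay, localized at each integer $h$, from a power series in $1/d$ whose coefficients are polynomials in $j$ of degree $l$. The target right-hand side then says that, for each fixed $h$, only contributions of orders $d^0, \ldots, d^{-(h-1)}$ should survive after multiplication by $1/m$ and passage to $d\to\infty$, while all higher-order contributions must cancel.

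The next step is to expand asymptotically every rational and hypergeometric factor that accompanies this exponential in \eqref{HG}. Under the scalings $m = \lambda\theta d$, $p = \theta d$, $q = (1-\theta)d$, $r = \theta(1-\lambda)d$, $s = (1-\theta-\lambda\theta)d$, the denominator becomes $(d + h - 2j - 1)\,h$; the ratios of Gamma functions in $V_{\alpha_1, \tau_1}^{(r,s,m)}$ and $U_{l(\alpha), l(\tau)}^{(r,s,m)}$ and each summand of the terminating ${}_4F_3(1)$ admit convergent expansions of the form $d^N \sum_{i \geq 0} a_i(n,h,j,k;\lambda,\theta)\,d^{-i}$ with an easily tracked integer $N$ and polynomial coefficients $a_i$. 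Aggregating these expansions and multiplying by $1/m = 1/(\lambda\theta d)$ produces, for each $h$, a formal series $\sum_{l \geq 0} d^{-l} t^l R_l(n, h; \lambda, \theta)$ where each $R_l$ is a finite double sum over $(k, j)$ of explicit rational functions of $n, h, \lambda, \theta$ weighted by the sign $(-1)^{n-\alpha_1}$.

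The decisive step, and the principal obstacle, is the cancellation of the coefficients of $d^{-l}$ for $l \geq h$ together with the evaluation of the surviving ones for $0 \leq l \leq h-1$. This is where the polynomial division algorithm enters, in the spirit of Biane's Lemma~3 in \cite{Bia}: the inner sum over $j$ takes the schematic form $\sum_{j=0}^{h-1} f_l(j)(h - 2j - 1)^l$ with $f_l$ a rational function of $j$ whose denominator factorizes into linear pieces, and Euclidean division of its numerator by that denominator reduces the evaluation to finite-difference identities of the type $\sum_{j=0}^{h-1} (-1)^j \binom{h-1}{j} P(j) = 0$ whenever $\deg P < h-1$, killing the contributions of order $d^{-l}$ for $l \geq h$. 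The constants $c_{n,h,l}^{(\lambda,\theta)}$ are then read off from the surviving finite residues, the factor $(2h)^l$ in $(2ht)^l/l!$ emerging from the combined $(-h)^l$ prefactor and the combinatorial weights produced by the symmetry $j \leftrightarrow h-1-j$ acting on $(h-2j-1)$. Executing this polynomial-division cancellation termwise in the ${}_4F_3$ expansion — and in particular controlling its interaction with the outer sum over hooks $\alpha = (n-k, 1^k)$ — is where the bulk of the calculation lies.
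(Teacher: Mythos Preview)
Your outline captures the right shape of the argument --- split off $e^{-ht}$, expand the residual exponential in powers of $1/d$, and look for Biane-type finite-difference cancellations --- but the mechanism you describe has two concrete defects when set against what actually makes the proof work.

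First, the polynomial long division is in the variable $d$, not in $j$. The paper does not treat the summand as a rational function of $j$ with a denominator that ``factorizes into linear pieces''; it writes, for each $(h,j,k)$, the summand $\tilde F(\lambda\theta d,\theta d,d,n,h,j,k)/d^l$ as a ratio $P_{n,h}(j,k,d)/D_{n,h,l}(j,k,d)$ of polynomials in $d$ and performs Euclidean division there. The quotient $Q_{n,h,l}(j,k,d)=\sum_{i=0}^{n-l-1} q_{n-l-1-i}^{(n,h)}(j,k)\,d^i$ is what governs the limit, and the crucial structural fact (Lemma~\ref{degree}) is that each coefficient $q_i^{(n,h)}(j,k)$ is a polynomial of degree at most $i$ in \emph{each} of $j$ and $k$, with a compatible bidegree condition. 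Your description of dividing the numerator of $f_l(j)$ by its $j$-denominator does not produce this bivariate degree control, and without it the cancellations cannot be organized.

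Second, you attribute the vanishing of the $d^{-l}$ contributions for $l\geq h$ to the $j$-sum, but this is the wrong sum. Since $\tilde F\sim d^{n-1}$, one must annihilate $n-1$ powers of $d$, and the $j$-sum $\sum_{j=0}^{h-1}(-1)^j\binom{h-1}{j}j^l(\cdots)$ can only supply $h-1$ of them; for $l>h-1$ the factor $j^l$ has degree exceeding $h-1$ and Biane's lemma gives nothing. In the paper the vanishing for $l>h-1$ comes from the $k$-sum $\sum_{k=0}^{n-h}(-1)^{n-h-k}\binom{n-h}{k}(\cdots)$, using that the $k$-degree of $q_{n-l-1-i}^{(n,h)}$ is at most $n-l-1-i<n-h$ in that range. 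The outer sum over hooks $\alpha=(n-k,1^k)$ is therefore not a residual bookkeeping issue but carries exactly half of the cancellation; your proposal relegates it to ``where the bulk of the calculation lies'' without indicating the mechanism, and the mechanism you do propose for the $j$-sum cannot do the job alone. Working from the ${}_4F_3$ form and expanding each hypergeometric summand separately will also obscure the clean bidegree structure that the direct rational-function formulation in $d$ makes visible.
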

Since 
\begin{equation*}
\lim_{d \rightarrow \infty} \frac{1}{m} M_n^{(r,s,m)}(\infty), \quad n \geq 1,
\end{equation*}
are the moments of the stationary distribution of the free Jacobi process (see \cite{Dem}, section 5), then Theorem \ref{Main} provides an expression for the moments of the free Jacobi process at any time $t > 0$. As to the description of 
$c_{n,h,l}^{(\lambda, \theta)}$, it is easily read off the end of the proof of Theorem \ref{Main}. Loosely speaking, th proof of this theorem relies on a polynomial long division with respect to the variable $d$ and only the constant term of the quotient polynomial contributes to the large $d$-limit. Besides, this term is shown to be polynomial in two variables, denoted $(j,k)$ in the proof of Theorem \ref{Main}, and $c_{n,h,l}^{(\lambda, \theta)}$ is the coefficient of $j^{h-l-1}k^{n-h}$ there. 

Using the generating series of elementary symmetric and completely homogeneous polynomials, we shall represent  the constant term of the quotient polynomial alluded to above as a doubly symmetric function in two alphabets. 
The latter are indeed the roots of the dividend and of the divisor in the polynomial long division. However, though this representation is interesting in its own, we are not able to use it to obtain a relatively simple expression of $c_{n,h,l}^{(\lambda, \theta)}$. That is why we shall write down a finite and doubly-indexed recurrence equation whose last term is $c_{n,h,l}^{(\lambda, \theta)}$. Doing so leads to simple explicit expressions of both $c_{n,n,l}^{(\lambda, \theta)}$ and $c_{n,h,h-1}^{(\lambda, \theta)}$. 
We also derive the expression 
\begin{equation*}
c_{n,h,h-1}^{(1,\theta)} =	(1-\theta)^n \theta^{n-1} \binom{2n}{n-h},
\end{equation*}
which shows that the binomial coefficient \eqref{Catalan} is not restricted to the value $\theta =1/2$. 

More generally and provided that $\lambda \neq 1, \theta = 1/2$, we realize that the binomial coefficient \eqref{Catalan} is (by the virtue of the Chu-Vandermonde formula) the special value at $\lambda = 1$ of the sum 
\begin{equation*}
\sum_{i=0}^{n-h}\binom{n}{i}\binom{n}{n-h-i}\frac{1}{\lambda^i}, \quad 0 \leq h \leq n.
\end{equation*}
Besides, our doubly-indexed recurrence equation takes a simpler form through binomial coefficients and Gauss hypergeometric functions taken at $\lambda =1$. 
 
The paper is organized as follows. In the next section, we use Carlitz summation formula to give another derivation of the moments of $\mu_t^{(1,1/2)}$. Section three is entirely devoted to the proof of Theorem \ref{Main}. In the fourth section, we prove the representation of the constant term of the quotient polynomial through elementary symmetric and completely homogeneous polynomials. In the fifth section, we derive the expressions of $c_{n,n,l}^{(\lambda, \theta)}$ and $c_{n,h,h-1}^{(\lambda, \theta)}$ as well as the recurrence equation allowing to compute $c_{n,h,l}^{(\lambda, \theta)}$ for arbitrary parameters $(\lambda, \theta)$. We also revisit there the particular case corresponding to $\lambda = 1, \theta=1/2$. In section six, we simplify the recurrence equation as much as possible the recurrence equation for arbitrary $\lambda$ and $\theta = 1/2$.

\section{Carlitz summation formula}
In \cite{DHH}, we proved the following result : 
\begin{teo}[\cite{DHH}, Corollary 1]
The moments of $\mu_t^{(1,1/2)}$ are given by: 
\begin{equation}\label{SpecCas}
\int_0^1 x^n \mu_t^{(1,1/2)}(dx) = \frac{1}{2^{2n}}\binom{2n}{n} + \frac{1}{2^{2n-1}} \sum_{h=1}^n\binom{2n}{n-h} \frac{e^{-ht}}{h}L_{h-1}^{(1)}(2ht), \quad n \geq 1,
\end{equation}
where $L_{h-1}^{(1)}$ is the $(h-1)$-th Laguerre polynomial of index one.
\end{teo}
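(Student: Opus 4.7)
The plan is to start from the moment formula \eqref{HG} for $M_n^{(r,s,m)}(t)$, specialize to $p(m) = m + 1/2$ and $d(m) = 2m$, apply Carlitz's summation formula to collapse the ${}_4F_3(1)$, and finally pass to the large $m$ limit after dividing by $m$.

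Under the substitutions $p = m + 1/2$, $d = 2m$, $q = m - 1/2$, the normalized eigenvalue simplifies to $\nu_{h,j}(d)/d = h + h(h-2j-1)/(2m) \to h$, so the exponential factor converges to $e^{-ht}$ uniformly in $j$. The Gamma function ratios in the prefactor of \eqref{HG} involve expressions of the forms $\Gamma(2m + a)/\Gamma(2m+b)$ and $\Gamma(m+1/2 + a)/\Gamma(m+1/2+b)$, which I would rewrite through the Legendre duplication formula
$$\Gamma(2z) = \frac{2^{2z-1}}{\sqrt{\pi}}\,\Gamma(z)\,\Gamma(z+1/2);$$
this mechanism is responsible for the powers of two appearing in the prefactor $1/2^{2n-1}$ of \eqref{SpecCas}. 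With the chosen parameters, the ${}_4F_3$ has two pairs of top/bottom parameters differing by $n-h$ and one pair differing by $n$, which is exactly the configuration where Carlitz's formula applies (see \cite{Abi} and Theorem 2.1 of \cite{Chu}); this reduces the ${}_4F_3(1)$ to a closed form expressible as a ratio of Gamma functions.

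After the Carlitz reduction, I would use the asymptotic $\Gamma(m + a)/\Gamma(m + b) \sim m^{a-b}$ to identify the leading order in $m$ after division by $m$. All subleading contributions vanish, and the surviving expression is a double sum over $h$ and $j$ in which the $j$-summand takes the form $\binom{h}{h-1-j}(-2ht)^j/j!$ up to a global $j$-independent factor. This is precisely the expansion of $L_{h-1}^{(1)}(2ht)$, so the Laguerre polynomial is recognized immediately. The remaining binomial $\binom{2n}{n-h}/(h\, 2^{2n-1})$ emerges from the combined Gamma and duplication manipulations, while the stationary contribution $\lim_{m\to\infty} M_n^{(r,s,m)}(\infty)/m = \binom{2n}{n}/2^{2n}$ is the arcsine moment and supplies the constant term of \eqref{SpecCas}. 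The main obstacle I expect is the careful tracking of the Gamma function asymptotics after the Carlitz step and verifying that the half-integer parameter configuration does indeed fit Carlitz's hypotheses; once this is accomplished, the appearance of the Laguerre polynomial structure is essentially algebraic.
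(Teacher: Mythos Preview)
Your outline matches the paper's approach through the Carlitz reduction and the use of Legendre duplication, but there is a genuine gap at the asymptotic step. After Carlitz and the Gamma simplifications, the inner $j$-sum takes (up to harmless factors) the form
\[
\frac{1}{m}\sum_{j=0}^{h-1}(-1)^{j}e^{(2ht)j/(2m)}\binom{h-1}{j}\frac{(2m+h-j-1)!}{h!\,(2m-j-1)!}.
\]
The ratio $(2m+h-j-1)!/(2m-j-1)!$ is a polynomial in $m$ of degree $h$, so after division by $m$ each individual $j$-summand is of order $m^{h-1}$ and diverges for $h\geq 2$. Termwise Gamma asymptotics $\Gamma(m+a)/\Gamma(m+b)\sim m^{a-b}$ therefore cannot be applied summand by summand: the limit exists only because of cancellations across different values of $j$. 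Your claim that ``all subleading contributions vanish'' and that the $j$-summand is already $\binom{h}{h-1-j}(-2ht)^{j}/j!$ is not what one finds at this stage.

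The missing ingredient is Biane's Lemma~3 in \cite{Bia}, which rests on the elementary identity $\sum_{j=0}^{b}(-1)^{j}\binom{b}{j}j^{a}=0$ for $a<b$. One expands $e^{(2ht)j/(2m)}$ in powers of $j/(2m)$; the $l$-th term carries a factor $j^{l}/(2m)^{l}$, and the alternating binomial sum in $j$ annihilates every monomial $j^{s}$ with $s<h-1$ in the resulting polynomial in $j$. Only after this cancellation does the expression collapse to a finite limit, and it is precisely this mechanism---not a direct termwise recognition---that manufactures the Laguerre coefficients $(-1)^{l}\binom{h}{l+1}/l!$. The paper invokes Biane's lemma verbatim at this point. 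Your proposal needs to incorporate this cancellation argument (or an equivalent device) before the Laguerre polynomial can be identified.
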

 In this section, we supply another derivation of \eqref{SpecCas} starting from \eqref{HG} and appealing to Carlitz summation formula (\cite{Abi}). Of course, we shall restrict ourselves to the time-dependent part of the RHS of \eqref{SpecCas} since the stationary one is known to be the $n$-th moment of the arcsine distribution 
\begin{equation*}
\mu_{\infty}^{(1,1/2)} := \frac{1}{\pi\sqrt{x(1-x)}} {\bf 1}_{[0,1]}(x).
\end{equation*} 
\begin{proof}[Another proof of \eqref{SpecCas}]
Substitute $p(m) = m+(1/2), d=2m,$ in \eqref{HG} and recall the Pochhammer symbol: 
\begin{equation*}
(a)_k = \frac{\Gamma(a+k)}{\Gamma(a)}, \, a > 0, \quad (-n)_k = (-1)^k\frac{n!}{(n-k)!}, \, n \in \mathbb{N}.
 \end{equation*}
Then Carlitz Summation formula entails (see e.g. \cite{Abi}, eq. (1)): 
\begin{multline}\label{Carlitz}
{}_4F_3\left(\begin{matrix} -(n-h), m+h-j, m+h-j+1/2, 2m-n+h-2j-1; \\ m-n+h-j, m-n+h-j+1/2, 2m+2h-2j; \end{matrix}, 1\right)  = \\ 
\frac{(m-n+h-j-1/2)}{m-j-1/2}\frac{(-2n)_{n-h}}{(2m-2n+2h-2j-1)_{n-h}} = (-1)^{n-h} \frac{(m-n+h-j-1/2)}{m-j-1/2} \frac{(2n)!}{(n+h)!}\\ \frac{\Gamma(2m-2n+2h-2j-1)}{\Gamma(2m-n+h-2j-1)}. 
 \end{multline}
Taking into account the time change $t \mapsto t/d = t/(2m)$ in \eqref{MomSeq} and keeping in mind \eqref{HG} and \eqref{Eigen1}, we need to compute the limit as $m \rightarrow +\infty$ of: 
\begin{multline*}
\sum_{h=1}^n \frac{e^{-ht}}{h}\binom{2n}{n-h}\sum_{j=0}^{h-1} \frac{(-1)^{j}e^{-h(h-2j-1)t/(2m)}}{(h-j-1)!j!}  \frac{(2m+2h-2j-1)(m-n+h-j-1/2)}{m(2m+h-2j-1)}
\\ \frac{\Gamma(m-j+1/2)\Gamma(m+h-j-1/2)\Gamma(m+h-j)\Gamma(2m-2n+2h-2j-1)\Gamma(2m+h-j)}{(m-j-1/2)\Gamma(m-j-1/2)\Gamma(2m+2h-2j)\Gamma(m+h-n-j+1/2)\Gamma(m+h-n-j)\Gamma(2m-j)}. 
\end{multline*}
To this end, we use Legendre duplication formula to get: 
\begin{eqnarray*}
\frac{\Gamma(m+h-j-1/2)\Gamma(m+h-j)}{\Gamma(2m+2h-2j)}&  = &\frac{\sqrt{\pi}}{2^{2m+2h-2j-1}(m+h-j-1/2)}, \\ 
 \frac{\Gamma(2m-2n+2h-2j-1)}{\Gamma(m+h-n-j+1/2)\Gamma(m+h-n-j)} & = & \frac{2^{2m+2h-2j-2n-2}}{\sqrt{\pi}(m-n+h-j-1/2)}. 
\end{eqnarray*}
Consequently, we are led after some simplifications to compute:  
\begin{multline*}
\lim_{m \rightarrow +\infty}\sum_{h=1}^n \frac{e^{-h(h-1)t/(2m)}}{2^{2n}}\frac{e^{-ht}}{h}\binom{2n}{n-h}\sum_{j=0}^{h-1} \frac{(-1)^{j}e^{jht/m}}{(h-j-1)!j!}  \frac{(2m-2j-1)}{m(2m+h-2j-1)} \frac{\Gamma(2m+h-j)}{\Gamma(2m-j)} = 
 \\ 
\lim_{m \rightarrow +\infty}\sum_{h=1}^n \frac{e^{-h(h-1)t/(2m)}}{2^{2n}}e^{-ht}\binom{2n}{n-h}\sum_{j=0}^{h-1}e^{(2ht)j/(2m)} \binom{h-1}{j} \frac{ (-1)^{j}(2m-2j-1)}{m(2m+h-2j-1)} \frac{(2m+h-j-1)!}{h!(2m-j-1)!}.
\end{multline*}
Up to the factor $(2m-2j-1)/(2m+h-2j-1)$ and to the time change $t \mapsto 2t$, the inner sum
\begin{equation*}
\frac{1}{2m}\sum_{j=0}^{h-1} (-1)^{j}e^{(ht)j/(2m)} \binom{h-1}{j}  \frac{(2m+h-j-1)!}{h!(2m-j-1)!},
\end{equation*}
was considered by Biane in \cite{Bia} (see the proof of Lemma 3 there) who proved the convergence to
\begin{equation*}
\frac{1}{h}L_{h-1}^{(1)}(ht).
\end{equation*}
Writing
\begin{equation*}
\frac{2m-2j-1}{2m+h-2j-1} = 1-\frac{h}{2m+h-2j-1} 
\end{equation*}
we get
\begin{multline*}
\lim_{m \rightarrow \infty} \frac{e^{-h(h-1)t/(2m)}}{2^{2n}}\sum_{h=1}^n e^{-ht}\binom{2n}{n-h}\sum_{j=0}^{h-1} (-1)^{j}e^{(2ht)j/(2m)} \binom{h-1}{j} \frac{2(2m-2j-1)}{(2m)(2m+h-2j-1)} \frac{(2m+h-j-1)!}{h!(2m-j-1)!} \\ 
= \frac{2}{2^{2n}}\sum_{h=1}^n e^{-ht}\binom{2n}{n-h}\frac{1}{h}L_{h-1}^{(1)}(2ht),
\end{multline*}
as desired. 
\end{proof}
A by-product of the previous proof is given in the following corollary: 
\begin{cor}
For any $n \geq 1$, the $n$-th moment of $J_t$ corresponding to the parameter set 
\begin{equation*}
p = m+1/2, \quad d= 2m \quad \Leftrightarrow \quad r= 1/2, \quad s=-1/2,
\end{equation*}
 reads: 
\begin{multline*}
M_{n}^{(1/2,-1/2,m)}(t) = \frac{1}{n!}\sum_{h=0}^{n-1}(-1)^h\binom{n-1}{h}\frac{(m-h)_n(3/2 +m-h-1)_n}{(2m-i)_n} + \\ \sum_{h=1}^n \frac{e^{-h(h-1)t/(2m)}}{2^{2n}}e^{-ht}\binom{2n}{n-h}\sum_{j=0}^{h-1}e^{(2ht)j/(2m)} \binom{h-1}{j} \frac{ (-1)^{j}(2m-2j-1)}{2m+h-2j-1} \frac{(2m+h-j-1)!}{h!(2m-j-1)!}.
\end{multline*}
\end{cor}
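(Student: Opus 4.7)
The plan is to isolate the two natural summands in the decomposition
\[
M_n^{(1/2,-1/2,m)}(t) \;=\; M_n^{(1/2,-1/2,m)}(\infty) \;+\; (\text{time-dependent double sum})
\]
provided by \eqref{FinalForm}, and to evaluate each piece separately. Since essentially every ingredient has already been assembled in the preceding proof of \eqref{SpecCas}, the argument reduces to careful bookkeeping.

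For the time-dependent part, I would reuse verbatim the chain of computations just carried out: substitute $p=m+1/2$ and $d=2m$ into representation \eqref{HG}, apply the Carlitz summation formula \eqref{Carlitz} to collapse the terminating ${}_4F_3(1)$, and invoke Legendre's duplication formula to simplify the ratios of Gamma functions. The only difference with the previous proof is that we stop short of letting $m \to \infty$; the resulting finite-$m$ expression matches, after harmless regrouping of factors, exactly the double sum displayed as the second summand in the corollary.

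For the stationary part $M_n^{(1/2,-1/2,m)}(\infty)$, I would invoke Krattenthaler's explicit moment formula for the Jacobi unitary ensemble (\cite{Kra}) and specialize its parameters to $r=1/2$, $s=-1/2$. Using $(3/2+m-h-1)_n = (m-h+1/2)_n$ and a short Pochhammer-symbol manipulation, the specialization reproduces
\[
\frac{1}{n!}\sum_{h=0}^{n-1}(-1)^h \binom{n-1}{h} \frac{(m-h)_n\,(m-h+1/2)_n}{(2m-h)_n},
\]
which is the first summand of the corollary.

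The main obstacle is purely clerical: aligning Krattenthaler's Pochhammer-symbol conventions with the compact ratio above, tracking the alternating signs, and ensuring the overall normalization matches. No genuinely new analytic input is required beyond the preceding theorem proof and the Carlitz summation identity.
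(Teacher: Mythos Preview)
Your proposal is correct and follows essentially the same approach as the paper: the time-dependent summand is read off from the finite-$m$ computations already performed in the proof of \eqref{SpecCas} (Carlitz summation plus Legendre duplication, stopping before the limit), and the stationary summand is quoted from an external moment formula for the Jacobi ensemble. The only discrepancy is bibliographic: the paper obtains the stationary term by specializing Corollary~II.3 of \cite{CDLV} (with $a=3/2$, $b=1/2$, $N=m$, $k=n$) rather than Krattenthaler's formula \cite{Kra}, so you would need to check that the latter indeed produces the displayed Pochhammer expression in the same closed form.
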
 
\begin{proof}
The stationary distribution of the eigenvalues of the Hermitian Jacobi process is given by the Selberg weight: 
\begin{equation*}
\prod_{i=1}^m \lambda_i^r(1-\lambda_i)^s \prod_{1 \leq i < v \leq m}(\lambda_i - \lambda_v)^2 {\bf 1}_{\{0 < \lambda_m < \cdots < \lambda_1 < 1\}}, 
\end{equation*}
up to a normalizing constant. Its $n$-th moment was computed in \cite{CDLV}, Corollary II. 3., and our sought expression follows after substituting in that corollary $a=3/2, b= 1/2, N = m, k=n$. 
 \end{proof}

\begin{rem}
In \cite{Chu} (Theorem 2.1), the following reduction formula was proved for terminating $1$-balanced ${}_4F_3$ at unity: 
\begin{multline*}
{}_4F_3\left(\begin{matrix} -N, a-c+N, (c/2), (c+1)/2; \\ 1+a-e, (e/2), (e+1)/2; \end{matrix}, 1\right) = \frac{(1+a-c-e)_N(e-c)_N}{(1+a-e)_N(e)_N}{}_3F_2\left(\begin{matrix} -N, a-c+N, c; \\ c+e-a-N, e+N; \end{matrix}, 1\right).
\end{multline*}
Specializing it with
\begin{equation*}
N = n-h, \quad p = p(m) = m + \frac{1}{2}, \quad c = 2(m+h-j), \quad e = 2(m+h-j-n), 
\end{equation*}
we get
\begin{multline*}
{}_4F_3\left(\begin{matrix} -(n-h), m+h-j, m+h-j+1/2, d-n+h-2j-1; \\ m-n+h-j, m-n+h-j+1/2, d+2h-2j; \end{matrix}, 1\right)  = \frac{(d-2m)_{n-h}(-2n)_{n-h}}{(d+2h-2j)_{n-h}(2m+2h-2j-2n)_{n-h}}\\ 
{}_3F_2\left(\begin{matrix} -(n-h),  d-n+h-2j-1, 2m+2h-2j; \\ 2m - d +h+1-n, 2m-2j+h-n; \end{matrix}, 1\right). 
 \end{multline*}
The RHS of the last equation may be further written as (\cite{Chu}, Corollary 2.2): 
\begin{equation*}
\frac{(-2n)_{n-h}}{(2m+2h-2j-2n)_{n-h}}{}_3F_2\left(\begin{matrix} -(n-h),  2m-d+1, 2m+2h-2j; \\ 2m-2j+h-n, d+2h-2j; \end{matrix}, 1\right), 
\end{equation*}
which reduces when $d=2m$ to Cariltz summation formula \eqref{Carlitz}:
\begin{equation*}
\frac{(-2n)_{n-h}}{(2m+2h-2j-2n)_{n-h}}{}_2F_1\left(\begin{matrix} -(n-h), 1, \\ 2m-2j+h-n; \end{matrix}, 1\right) = \frac{(m-n+h-j-1/2)(-2n)_{n-h}}{(m-j-1/2)(2m-2n+2h-2j-1)_{n-h}}, 
\end{equation*}
by the virtue of the Gauss hypergeometric Theorem: 
\begin{equation*}
{}_2F_1(-N, b;c, 1) = \frac{(c-b)_{N}}{(c)_N}. 
\end{equation*}
However, we do not know whether a simple expression of the above terminating ${}_3F_2$ hypergeometric function exists for $d \neq 2m$. 
\end{rem}

\section{The general case: proof of Theorem \ref{Main}} 
We now proceed with the general case and prove Theorem \ref{Main}. The proof is long and for ease of reading, we divide it into three steps: 
\begin{enumerate}
\item Express the summands of the second term in the RHS of \eqref{FinalForm} as a rational function in the variable $d$. 
\item Inspired by the proof of Lemma 2 in \cite{Bia}, we expand the exponential factor $e^{(2thj)/d}$ into powers of $1/d$, truncate the obtained series and perform there a polynomial long division. 
\item Obtain cancellations and determine exactly the terms having non zero contributions in the large $d$-limit. The expression displayed in Theorem \ref{Main} follows after straightforward computations. 
\end{enumerate}
\subsection{First step} For any $n \geq 1$, denote
\begin{align*}
	\tilde{M}_n^{(r,s,m)}(t)  := \frac{1}{m} \sum_{\substack{\alpha \, \, \textrm{hook} \\ |\alpha| = n, l(\alpha) \leq m}} (-1)^{n-\alpha_1}  \sum_{\substack{\tau \subseteq \alpha \\ \tau \neq \emptyset}}
	\frac{e^{-\nu_\tau(r,s,m) t/d} \, V_{\alpha_1, \tau_1}^{(r,s,m)} \, U_{l(\alpha), l(\tau)}^{(r,s,m)} }{(r+s+\tau_1+2m-l(\tau)) (\tau_1+l(\tau)-1)},
\end{align*}
where we recall that $\nu_\tau(r,s,m)$, $V_{\alpha_1, \tau_1}^{(r,s,m)}$ and $U_{l(\alpha), l(\tau)}^{(r,s,m)}$ were defined in \eqref{Eigen}, \eqref{V} and \eqref{U}  respectively. 
Recall also the parameterization of the hooks $\tau \subseteq \alpha$: 
\begin{equation*}
\alpha_1=n-k, \, \tau_1=h-j, \quad l(\alpha)=k+1, \, l(\tau)=j+1. 
\end{equation*}
Consequently, we may rewrite $\tilde{M}_n^{(r,s,m)}(t)$ as
\begin{align*}
	\tilde{M}_n^{(r,s,m)}(t) &= \frac{1}{m} \sum_{h=1}^n\sum_{j=0}^{h-1}\sum_{k=j}^{j+n-h}(-1)^kF(m,p,d,n,h,j,k)e^{-ht-\frac{ht}{d}(h-(2j+1))}
\end{align*}
where 
\begin{multline*}
	F(m,p,d,n,h,j,k)= \frac{\Gamma(d+h-j)\Gamma(n-k+m)\Gamma(d-j-k-1)\Gamma(p+n-k)\Gamma(q+h-j)}{(n-k-h+j)!(k-j)!j!\Gamma(d+n-k+h-j)\Gamma(p+h-j)\Gamma(h-j)}
	\\ \frac{(d+2h-2j-1)(d-2j-1)\Gamma(p-j)}{h(d+h-2j-1)\Gamma(m-k)\Gamma(p-k)\Gamma(q-j)\Gamma(d-j)}.
\end{multline*}
Performing the index changes $k\mapsto n-h+j-k$, we further obtain
\begin{align}\label{For1}
\tilde{M}_n^{(r,s,m)}(t) &=  \sum_{h=1}^n\frac{e^{-ht - th(h-1)/d}}{n!}\binom{n}{h}\sum_{k=0}^{n-h}\sum_{j=0}^{h-1}(-1)^{n-h-k+j}\binom{n-h}{k}\binom{h-1}{j}\tilde{F}(m,p,d,n,h,j,k)e^{\frac{2thj}{d}}
\end{align}
where
\begin{align*}
	\tilde{F}(m,p,d,n,h,j,k)=&\frac{(d-2j-1)(d+2h-2j-1)(d-j)_h(d-p-j)_h(m+h-n-j+k)_n(p+h-n-j+k)_n}{m(d+h-2j-1)(p-j)_h(d-n+h-2j-1+k)_{n+h+1}}.
\end{align*}
Write $m=(\lambda \theta) d, p = \theta d$, then 
\begin{align*}
	\tilde{F}(m,p,& d,n,h,j,k) = \tilde{F}((\lambda \theta)d,\theta d,d,n,h,j,k)
	\\& =\frac{(d-2j-1)(d+2h-2j-1)(d-j)_h((1-\theta)d-j)_h((\lambda \theta)d+h-n-j+k)_n(\theta d+h-n-j+k)_n}{(\lambda \theta)d(d+h-2j-1)(\theta d-j)_h(d-n+h-2j-1+k)_{n+h+1}}.
\end{align*}
\subsection{Second step}
Since the Pochhammer symbol $(a)_k$ has exactly $k$ factors, then a quick inspection shows that 
\begin{align}\label{AB}
	\tilde{F}((\lambda \theta)d,\theta d,d,n,h,j,k) \sim \frac{(1-\theta)^h(\lambda \theta^2)^n}{(\lambda \theta)\theta^h} d^{n-1}, \quad d = d(m) \rightarrow +\infty,
\end{align}
so that taking the limit as $m \rightarrow +\infty$ in \eqref{For1} yields an indeterminate form unless $n = 1$. Note that the situation was similar in \cite{Bia} when dealing with the moments of the unitary Brownian motion. 
In that paper, the author succeeded to obtain cancellations making use of the following elementary, yet useful, lemma: 
\begin{lem}[\cite{Bia}, Lemma 2]
Let $0 \leq a \leq b$ be integers. Then 
\begin{equation*}
\sum_{j=0}^b (-1)^j j^b \binom{b}{j} = \left\{\begin{array}{lcr} 
0 & \textrm{if} & a < b, \\ 
(-1)^a a! & \textrm{if} & a =b.
\end{array}
\right.
\end{equation*}
\label{LemBiane}
\end{lem}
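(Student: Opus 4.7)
The plan is to recognize this identity as a classical statement about iterated forward differences of monomials. Reading the exponent inside the sum as $j^a$ (the symbol $j^b$ in the statement appears to be a typographical slip, since $a$ would otherwise not enter the left-hand side at all, whereas the case distinction is on $a$ versus $b$), I would first rewrite the sum as a finite difference: setting $\Delta f(x):=f(x+1)-f(x)$, one has
\[
(-1)^b\sum_{j=0}^b (-1)^{j}\binom{b}{j}j^a \;=\; \sum_{j=0}^b (-1)^{b-j}\binom{b}{j}j^a \;=\; \left.\Delta^b x^a\right|_{x=0}.
\]
Since $\Delta$ strictly decreases the degree of any polynomial by one, $\Delta^b$ annihilates every polynomial of degree less than $b$, and $\Delta^b x^b = b!$ (a constant). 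This yields the two cases of the lemma simultaneously.

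Alternatively, and arguably more transparent, I would use the exponential generating function identity
\[
(e^x-1)^b \;=\; \sum_{j=0}^b \binom{b}{j}(-1)^{b-j}e^{jx} \;=\; \sum_{a\ge 0}\left(\sum_{j=0}^b (-1)^{b-j}\binom{b}{j}\,j^a\right)\frac{x^a}{a!}.
\]
Because $e^x-1=x+O(x^2)$, we have $(e^x-1)^b=x^b+O(x^{b+1})$, so matching the coefficient of $x^a/a!$ on both sides immediately gives $0$ for $a<b$ and $b!$ for $a=b$. Multiplying through by $(-1)^b$ and noting that $(-1)^b b! = (-1)^a a!$ precisely when $a=b$ delivers the stated result.

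The identity is a one-line consequence of either of the two facts above (iterated finite differences annihilate low-degree polynomials, or $e^x-1$ has a simple zero at the origin), so I do not expect any substantive obstacle. The only point requiring a moment of care is bookkeeping with signs: verifying that the factor $(-1)^b$ produced by either approach agrees with the $(-1)^a$ in the statement, which is automatic since the nontrivial value occurs exactly in the case $a=b$.
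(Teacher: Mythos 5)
The paper does not prove this lemma at all: it is quoted verbatim from Biane's paper (Lemma 2 of \cite{Bia}) and used as a black box, so there is no in-paper argument to compare against. Your proof is correct and standard. You are also right that the exponent in the displayed sum must be $j^a$ rather than $j^b$ (otherwise $a$ would not occur on the left-hand side while the case distinction depends on it); this is a typographical slip in the statement as reproduced here. Both of your routes --- reading the sum as the $b$-th forward difference $\Delta^b x^a$ evaluated at $0$, which annihilates polynomials of degree less than $b$ and sends $x^b$ to $b!$, or extracting the coefficient of $x^a/a!$ from $(e^x-1)^b = x^b + O(x^{b+1})$ --- are complete, and the sign bookkeeping $(-1)^b b! = (-1)^a a!$ in the case $a=b$ is handled correctly. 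Either argument would serve as a self-contained proof should one wish to avoid the external citation.
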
 
Mimicking the proof of Lemma 3 in \cite{Bia}, we expand the exponential term $e^{\frac{2thj}{d}}$ in \eqref{For1}: 
\begin{align*}
	\sum_{k=0}^{n-h}(-1)^{n-h-k} \sum_{j=0}^{h-1}(-1)^{j}\binom{h-1}{j}\binom{n-h}{k}\tilde{F}((\lambda \theta) d,\theta d,d,n,h,j,k)\left(\sum_{l=0}^{n-1}\frac{(2thj)^l}{l!d^l}+O(d^{-n})\right).
\end{align*}
By the virtue of \eqref{AB}, we only need to analyse on the following triple sum:
\begin{align*}
\sum_{l=0}^{n-1}\frac{(-2th)^l}{l!}\sum_{k=0}^{n-h}\sum_{j=0}^{h-1} (-1)^{n-h-k+j}\binom{h-1}{j}j^l\binom{n-h}{k}\frac{\tilde{F}((\lambda \theta) d,\theta d,d,n,h,j,k)}{d^l}.
\end{align*}
More precisely, we shall use Lemma \ref{LemBiane} to obtain cancellations in the $(j,k)$-double sum. To this end, let\footnote{For ease of notations, we shall omit the parameters $(\lambda,\theta)$.}
\begin{equation*}
P_{n,h}(j,k,d):=(d-2j-1)(d+2h-2j-1)(d-j)_h((1-\theta)d-j)_h((\lambda \theta) d+h-n-j+k)_n(\theta d +h-n-j+k)_n
\end{equation*}
denote the numerator of $\tilde{F}((\lambda \theta) d,\theta d,d,n,h,j,k)/d^l$, and for fixed $0 \leq l \leq n-1$, let
\begin{equation*}
D_{n,h,l}(j,k,d):= (\lambda \theta) d^{l+1}(d+h-2j-1)(\theta d-j)_h(d-n+h-2j-1+k)_{n+h+1},
\end{equation*}
be its denominator. Performing polynomial long division with respect to the variable $d$ for fixed $(n,h,l)$, there exists a polynomial (quotient) $Q_{n,h,l}(j,k,d)$ of degree $n-l-1$ such that 
\begin{equation*}
P_{n,h}(j,k,d) = D_{n,h,l}(j,k,d)Q_{n,h,l}(j,k,d) + \, \textrm{Remainder}.
\end{equation*}
Consequently, the asymptotic behaviour of $\tilde{M}_n^{(r,s,m)}(t)$ as $d \rightarrow +\infty$ is governed by:
 \begin{align}\label{sum3}
 	\sum_{l=0}^{n-1} \frac{(2th)^l}{l!} \sum_{j=0}^{h-1}(-1)^{j}\binom{h-1}{j}j^l\sum_{k=0}^{n-h}(-1)^{n-h-k}\binom{n-h}{k}Q_{n,h,l}(j,k,d).
 \end{align}
\subsection{Third step}
Write
\begin{equation}\label{Q-Exp}
Q_{n,h,l}(j,k,d) := \sum_{i=0}^{n-l-1}q_{n-l-1-i}^{(n,h)}(j,k)d^i, 
\end{equation}
and note that its coefficients $(q_{i}^{(n,h)}(j,k))_{i=0}^{n-l-1}$ do not depend on the index $l$ since the latter only appears in the factor $d^l$ of $D_{n,h,l}(j,k,d)$. 
The following lemma describes the polynomial structure of these coefficients in the variables $(j,k)$. 
\begin{lem}\label{degree}
For any $0\le i\le n-l-1$, the coefficient $q_i^{(n,h)}(j,k)$ is a polynomial of degree at most $i$ in both variables $j$ and $k$. Moreover, the coefficient of $j^s$ there (resp. $k^s$) is a polynomial of degree at most $i-s$ in $k$ (resp. $j$). 
\end{lem}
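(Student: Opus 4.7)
The plan is to establish the slightly stronger statement that $q_i^{(n,h)}(j,k)$ has \emph{total} degree at most $i$ in the joint pair $(j,k)$; this immediately yields both the separate degree bounds in $j$ and in $k$ and the refined ``coefficient of $j^s$ (resp.\ $k^s$)'' statement.

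My first step is to observe that, up to a scalar prefactor, both $P_{n,h}(j,k,d)$ and $D_{n,h,l}(j,k,d)$ are products of linear forms in the three variables $(d,j,k)$: every factor appearing in either polynomial---those coming from $(d-j)_h$, $((1-\theta)d-j)_h$, $((\lambda\theta)d+h-n-j+k)_n$, $(\theta d+h-n-j+k)_n$, $(\theta d-j)_h$, $(d-n+h-2j-1+k)_{n+h+1}$ and the remaining linear terms---has the shape $\alpha d+\beta j+\gamma k+\delta$. A careful count gives $2+2h+2n$ such linear forms in $P_{n,h}$; the denominator $D_{n,h,l}$ is the product of $\lambda\theta d^{l+1}$ (viewed as $l+1$ trivial factors proportional to $d$) with $1+h+(n+h+1)=2h+n+2$ further linear forms, giving a total of $2h+n+l+3$ factors. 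Assigning the common weight $1$ to each of $d,j,k$, any product of $N$ such linear forms has weighted total degree at most $N$; reading off the coefficient of $d^\alpha$ yields the two bounds
\begin{equation*}
\deg_{(j,k)}\bigl([d^\alpha]P_{n,h}\bigr)\le (2h+2n+2)-\alpha, \qquad \deg_{(j,k)}\bigl([d^\alpha]D_{n,h,l}\bigr)\le (2h+n+l+3)-\alpha,
\end{equation*}
which will drive the proof.

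My second step is to note that the $d$-leading coefficient of $D_{n,h,l}$ is the nonzero scalar $\lambda\theta^{h+1}$, independent of $(j,k)$, so that the long division in $d$ can be performed entirely in $\mathbb{R}[j,k][d]$ without spurious $(j,k)$-denominators. Writing $Q_{n,h,l}(j,k,d)=\sum_{\gamma}Q_\gamma(j,k)d^\gamma$ (supported on $0\le\gamma\le n-l-1$) and comparing the coefficients of $d^{2h+n+l+3+\gamma}$ in $P_{n,h}=D_{n,h,l}Q_{n,h,l}+R$ (with $\deg_d R<\deg_d D_{n,h,l}$), I obtain the descending recursion
\begin{equation*}
\lambda\theta^{h+1}\,Q_\gamma(j,k) = [d^{2h+n+l+3+\gamma}]P_{n,h} - \sum_{\gamma'=\gamma+1}^{n-l-1}[d^{2h+n+l+3+\gamma-\gamma'}]D_{n,h,l}\cdot Q_{\gamma'}(j,k).
\end{equation*}

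The third step is a descending induction on $\gamma$ showing $\deg_{(j,k)}Q_\gamma\le (n-l-1)-\gamma$. For the base case $\gamma=n-l-1$, the recursion reduces to the quotient of two $d$-leading coefficients, both scalars, so $Q_{n-l-1}$ is a constant. For the inductive step, Step~1 gives $\deg_{(j,k)}\bigl([d^{2h+n+l+3+\gamma}]P_{n,h}\bigr)\le(n-l-1)-\gamma$, and for any $\gamma'>\gamma$ the product $[d^{2h+n+l+3+\gamma-\gamma'}]D_{n,h,l}\cdot Q_{\gamma'}$ has degree at most $(\gamma'-\gamma)+(n-l-1-\gamma')=(n-l-1)-\gamma$; division by the scalar $\lambda\theta^{h+1}$ preserves this bound. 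Re-indexing through $q_i^{(n,h)}=Q_{n-l-1-i}$ concludes the proof. I do not foresee any serious obstacle here: the delicate part is simply the accurate enumeration of linear factors in $P_{n,h}$ and $D_{n,h,l}$ (especially treating $\lambda\theta d^{l+1}$ as $l+1$ factors of weight one) and the verification that the scalar $\lambda\theta^{h+1}$ is nonzero so the long division behaves cleanly over $\mathbb{R}[j,k]$.
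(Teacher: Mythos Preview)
Your argument is correct and follows essentially the same route as the paper: both exploit the long-division recursion (the paper's \eqref{q-coef}) together with the fact that $P_{n,h}$ and $D_{n,h,l}$ are products of forms linear in $(d,j,k)$, and then proceed by induction on $i$ (equivalently, descending induction on the $d$-exponent). Your choice to bound the \emph{total} $(j,k)$-degree of $q_i^{(n,h)}$ by $i$ is a slightly cleaner packaging, since it yields both the separate degree bounds and the refined statement about the coefficient of $j^s$ (resp.\ $k^s$) in one stroke, whereas the paper treats these two parts by separate inspections.
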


\begin{proof}
Let $(a_i^{(n,h)})_{0\le i\le 2n+2h+2}$ and $(b_i^{(n,h)})_{0\le i\le n+2h+l+3}$ be respectively the coefficients of the polynomials $P_{n,h}(j,k,\cdot)$ and $D_{n,h,l}(j,k,\cdot)$ so that:
\begin{equation*}
P_{n,h}(j,k,d) =\sum_{i=0}^{2n+2h+2}a_{i}^{(n,h)}(j,k)d^i \quad {\rm and\ } D_{n,h,l}(j,k,d) =\sum_{i=l+1}^{n+2h+l+3}b^{(n,h)}_{i}(j,k)d^i.
\end{equation*}
Since the degree of the remainder of the polynomial long division is strictly less than $B$, then $(q_{i}^{(n,h)})_{i=0}^{n-l-1}$ satisfy the following relations:
\begin{equation}\label{q-coef}
 q_{i}^{(n,h)}(j,k) = \tilde{a}_{2n+2h+2-i}^{(n,h)}(j,k)  - \sum_{v=0}^{i-1}\tilde{b}_{n+2h+l+3-i+v}^{(n,h)}(j,k)q_{v}^{(n,h)}(j,k), \quad  0 \leq i \leq n-l-1
\end{equation}
where
\begin{equation*}
	\tilde{a}_{2n+2h+2-i}^{(n,h)}(j,k)=\frac{a_{2n+2h+2-i}^{(n,h)}(j,k)}{(\lambda\theta)\theta^h}, \quad \tilde{b}_{n+2h+l+3-i+v}^{(n,h)}(j,k)=\frac{b_{n+2h+l+3-i+v}^{(n,h)}(j,k)}{(\lambda\theta)\theta^h}.
\end{equation*}
Consequently, we readily infer from \eqref{q-coef} by induction and from the product forms of $P_{n,h}(j,k,d)$ and $D_{n,h,l}(j,k,d)$ that $q_{i}^{(n,h)}(j,k)$ is a polynomial of degree at most $i$ in both $j$ and $k$. 
The last statements follows similarly  from \eqref{q-coef} by induction and a close (yet straightforward) inspection of the coefficients of $j^s$ and of $k^s$ in $q_{i}^{(n,h)}(j,k)$. 
\end{proof}

\begin{rem}
Later, we shall express the coefficients $(q_{i}^{(n,h)}(j,k))$ through elementary and completely homogeneous symmetric polynomials in the roots of $P_{n,h}(j,k,d)$ and of $D_{n,h}(j,k,d)$ respectively. These expressions may be used to write another proof of the previous lemma. 
 \end{rem}
With the help of Lemma \ref{LemBiane} and Lemma \ref{degree}, we are now able to obtain all the cancellations occurring in \eqref{sum3} and determine the terms having non zero contributions in the large $d$-limit:
\begin{pro}
	All the terms in \eqref{sum3} vanish except those corresponding to the $(k,j)$-monomials $k^{n-h}j^{h-1-l}$ in the constant term coefficient $q_{n-l-1}^{(n,h)}(j,k)$ of the polynomial $Q_{n,h,l}(j,k,d)$.
\end{pro}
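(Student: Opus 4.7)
The plan is to substitute the expansion \eqref{Q-Exp} into the triple sum \eqref{sum3} and then apply Lemma \ref{LemBiane} to the $j$- and $k$-sums monomial by monomial. Writing a generic monomial of $q_{n-l-1-i}^{(n,h)}(j,k)$ as $c\, j^a k^b$, its total contribution to \eqref{sum3} factors as
\begin{equation*}
\frac{(2th)^l}{l!}\, c\, d^i \left[\sum_{j=0}^{h-1}(-1)^j\binom{h-1}{j} j^{l+a}\right] \left[\sum_{k=0}^{n-h}(-1)^{n-h-k}\binom{n-h}{k} k^b\right].
\end{equation*}
Rewriting $(-1)^{n-h-k} = (-1)^{n-h}(-1)^k$ and invoking Lemma \ref{LemBiane}, the $j$-bracket vanishes unless $l+a \geq h-1$ and the $k$-bracket vanishes unless $b \geq n-h$. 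So only monomials $j^a k^b$ with $a \geq h-1-l$ and $b \geq n-h$ can contribute.

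Next I would invoke Lemma \ref{degree} to cap the surviving bidegrees from above. Its refined statement says that the coefficient of $k^b$ in $q_{n-l-1-i}^{(n,h)}(j,k)$ is a polynomial in $j$ of degree at most $(n-l-1-i)-b$; equivalently, every nonzero monomial $j^a k^b$ in that polynomial satisfies the joint bound $a+b \leq n-l-1-i$. Combining this with the lower bounds just obtained gives
\begin{equation*}
(h-1-l) + (n-h) \leq a+b \leq n-l-1-i,
\end{equation*}
which simplifies to $i \leq 0$. Thus only $i=0$ survives, and in that case all intermediate inequalities must be saturated, forcing $a = h-1-l$ and $b = n-h$. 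This isolates the single surviving contribution as the $j^{h-1-l}k^{n-h}$ coefficient of the constant-in-$d$ term $q_{n-l-1}^{(n,h)}(j,k)$ of $Q_{n,h,l}(j,k,d)$, which is exactly what the proposition asserts.

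The main obstacle is really the joint-degree bound $a+b \leq n-l-1-i$: it is strictly stronger than either separate degree bound on $j$ or $k$ alone, and the whole cancellation mechanism rests on it. Without this coupling between the two degrees, the two Biane-type vanishing criteria would not, on their own, be sufficient to collapse the index $i$ to zero; only the refined second half of Lemma \ref{degree} does. Once it is in hand, the remainder of the argument is a direct book-keeping exercise.
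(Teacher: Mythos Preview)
Your proof is correct and follows essentially the same approach as the paper's: both rely on Lemma~\ref{LemBiane} to force the lower bounds $l+a\ge h-1$ and $b\ge n-h$, and on the refined second statement of Lemma~\ref{degree} (equivalently the joint bound $a+b\le n-l-1-i$) to force $i=0$ and the saturation $a=h-1-l$, $b=n-h$. Your presentation is slightly more streamlined, collapsing the paper's case-by-case treatment (the separate discussions of $l>h-1$, of $1\le i\le n-l-1$, and of $i=0$) into a single inequality chain; note in passing that when $l\ge h$ your saturated value $a=h-1-l$ is negative, which simply means no monomial survives, consistent with the proposition.
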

\begin{proof}

By the virtue of \eqref{sum3} and of \eqref{Q-Exp}, we need firstly to show that
\begin{align*}
 \sum_{j=0}^{h-1}(-1)^{j}\binom{h-1}{j}j^l\sum_{k=0}^{n-h}(-1)^{n-h-k}\binom{n-h}{k}q_{n-l-1-i}^{(n,h)}(j,k)
\end{align*}
is zero for any $1\le i\le n-l-1$. According to Lemma \ref{LemBiane} and since $q_{n-l-1-i}^{(n,h)}$ is a polynomial of degree at most $n-l-1-i$ in $j$ and $k$ by Lemma \ref{degree}, we notice that the terms corresponding to $l > h-1$ in \eqref{sum3} give zero contribution. Indeed, 
\begin{align*}
	n-l-1-i=n-(l+1)-i < n-h-i < n-h
\end{align*}
so that the inner sum over $k$ is zero for all $1\le i\le n-l-1$.

Now consider a $j$-monomial of degree $s$ in $q_{n-l-1-i}^{(n,h)}$. If $s+l < h-1$ then the sum over $j$ vanishes. Otherwise $s+l\ge h-1$ and the sum over $k$ is zero for any $1\le i\le n-l-1$ since
\begin{align*}
	n-l-1-i-s=n-1-i-(l+s)\le n-1-i-(h-1)=n-h-i < n-h.
\end{align*}
Secondly, let $i=0$ and recall from lemma \ref{degree} that the coefficient of $j^s$ in $q_{n-l-1}^{(n,h)}$ is a polynomial in $k$ of degree at most $n-l-1-s$. Reasoning as above, if $s+l < h-1$ then the sum over $j$ vanishes while the sum over $k$ does so if $s+l > h-1$ since 
\begin{align*}
	n-l-1-s=n-1-(l+s) < n-1-(h-1)=n-h.
\end{align*}
Hence, we are only left with the monomials $k^{v}j^{h-1-l}, v \leq n-h,$ in $q_{n-l-1}^{(n,h)}(j,k)$. But once more, the sum over $k$ vanishes unless $v=n-h$, the proposition is proved.   
\end{proof}

The end of the proof of Theorem \ref{Main} goes as follows. The sum \eqref{sum3} reduces to 
\begin{align*}\label{sum4}
	\sum_{l=0}^{h-1} \frac{(2th)^l}{l!} \sum_{j=0}^{h-1}(-1)^{j}\binom{h-1}{j}j^l\sum_{k=0}^{n-h}(-1)^{n-h-k}\binom{n-h}{k}c_{n,h,l}^{(\lambda,\theta)}k^{n-h}j^{h-l-1}
\end{align*}
where $c_{n,h,l}^{(\lambda,\theta)}$ is the coefficient of the monomial $k^{n-h}j^{h-l-1}$ in $q_{n-l-1}^{(n,h)}(j,k)$. Appealing again to Lemma \ref{LemBiane}, we may simplify further this triple sum to get
\begin{equation*}
\sum_{l=0}^{h-1} \frac{(2th)^l}{l!} (-1)^{h-1}(h-1)!(n-h)!c_{n,h,l}^{(\lambda,\theta)},
\end{equation*}
so that
\begin{align*}
\lim_{d \rightarrow \infty}\tilde{M}_n^{(r,s,m)}(t) &= \sum_{h=1}^n\frac{e^{-ht}}{n!}\binom{n}{h}(-1)^{h-1}(n-h)!(h-1)!\sum_{l=0}^{h-1} \frac{(2th)^l}{l!} c_{n,h,l}^{(\lambda,\theta)} 
\\ & =  \sum_{h=1}^n(-1)^{h-1}\frac{e^{-ht}}{h}\sum_{l=0}^{h-1} \frac{(2ht)^{l}}{l!} c_{n,h,l}^{(\lambda,\theta)},
\end{align*}
as stated in Theorem \ref{Main}. 

\section{Representations of the constant term coefficient $q_{n-l-1}(j,k)$} 
So far, we proved that the large size asymptotics of the moments of $J_t$ are governed by the constant term coefficient $q_{n-l-1}(j,k)$ of the polynomial $Q_{n,h,l}(j,k,\cdot)$. Besides, this coefficient is the last term $i=n-l-1$ of the inductive relation \eqref{q-coef}. 
In this section, we derive another representation of $q_{n-l-1}(j,k)$ and of all the coefficients of $Q_{n,h,l}(j,k,\cdot)$ through elementary and complete homogeneous symmetric polynomials taken at the roots of $P_{n,h}(j,k,\cdot)$ and of $D_{n,h}(j,k,\cdot)$. 
Though we believe that this fact is very likely known, we were not able to find any reference to it and as such, we state and prove it below. Using a result of Goulden and Greene (\cite{Gou-Gre}), the obtained representation may be further written as a complete symmetric polynomial in some set of variables (alphabet) formed out of the the roots of $P_{n,h}(j,k,\cdot)$ and of $D_{n,h}(j,k,\cdot)$. 

For sake of simplicity, set 
\begin{equation*}
A := 2n+2h+2, \quad B := n+2h+l+3, \quad C := A-B = n-l-1, 
\end{equation*}
for fixed $n,h,l,$ and denote $(y_1,\dots,y_A)$ and $(z_1, \dots, z_B)$  the roots of the polynomials $P_{n,h}(j,k,\cdot)$ and $D_{n,h,l}(j,k,\cdot)$ respectively (which depend of course of $(n,h,j,k)$).  
Then, 
\begin{pro}
The polynomial $Q_{n,h,l}(j,k, d)$ admits the following expansion:  
\begin{align}\label{Desc0}
Q_{n,h,l}(j,k,d) &= d^{A-B}\frac{a_A}{b_B}\sum_{i=0}^{A-B} \frac{(-1)^i}{d^i}\sum_{v=0}^i(-1)^ve_{i-v}(y_1,\dots, y_A) h_{v}(z_1,\dots, z_B) \nonumber
\\& = \frac{a_A}{b_B}\sum_{i=0}^{C}d^i (-1)^{C-i}\sum_{v=0}^{C-i}(-1)^v e_{C-i-v}(y_1,\dots, y_A) h_{v}(z_1,\dots, z_B),
\end{align}
where $e_i(y_1,\dots, y_A)$ and $h_i(z_1,\dots, z_B)$ are the $i$-th elementary and complete homogeneous symmetric polynomials respectively. 
In particular, the constant term coefficient of $Q_{n,h,l}(j,k,\cdot)$ is given by 
\begin{equation}\label{Desc1}
q_C^{(n,h)}(j,k) = (-1)^C \frac{(1-\theta)^h(\lambda \theta^2)^n}{\theta^h(\lambda\theta)} \sum_{v=0}^C (-1)^v e_{C-v}(y_1,\dots, y_A) h_v(z_1,\dots, z_B).
\end{equation}
\end{pro}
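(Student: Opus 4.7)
The plan is to derive the expansion by the standard trick of expanding $P_{n,h}(j,k,d)/D_{n,h,l}(j,k,d)$ as a Laurent series in $1/d$ near infinity and reading off the polynomial part. Factor the two polynomials over their roots, so that
\[
P_{n,h}(j,k,d) = a_A\prod_{i=1}^{A}(d-y_i), \qquad D_{n,h,l}(j,k,d) = b_B\prod_{i=1}^{B}(d-z_i),
\]
where the leading coefficients are read off from the product forms defining $P_{n,h}$ and $D_{n,h,l}$; in particular $a_A = (1-\theta)^h(\lambda\theta^2)^n$ and $b_B = \lambda\theta^{h+1}$, which explains the prefactor $a_A/b_B$ in the statement.

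Next, writing $P_{n,h}=Q_{n,h,l}\,D_{n,h,l}+R_{n,h,l}$ with $\deg R_{n,h,l}<B$, the ratio
\[
\frac{P_{n,h}(j,k,d)}{D_{n,h,l}(j,k,d)} = Q_{n,h,l}(j,k,d)+\frac{R_{n,h,l}(j,k,d)}{D_{n,h,l}(j,k,d)}
\]
admits a Laurent expansion in $1/d$ at infinity in which only the quotient $Q_{n,h,l}$ contributes non-negative powers of $d$, since $R_{n,h,l}/D_{n,h,l}=O(1/d)$. On the other hand, using the generating function identities for the elementary and complete homogeneous symmetric polynomials,
\[
\prod_{i=1}^{A}\Bigl(1-\frac{y_i}{d}\Bigr) = \sum_{k\ge 0}(-1)^k\frac{e_k(y_1,\dots,y_A)}{d^k}, \qquad \prod_{i=1}^{B}\Bigl(1-\frac{z_i}{d}\Bigr)^{-1} = \sum_{k\ge 0}\frac{h_k(z_1,\dots,z_B)}{d^k},
\]
and multiplying them using the Cauchy product yields
\[
\frac{P_{n,h}(j,k,d)}{D_{n,h,l}(j,k,d)} = \frac{a_A}{b_B}\,d^{A-B}\sum_{i\ge 0}\frac{(-1)^i}{d^i}\sum_{v=0}^{i}(-1)^v e_{i-v}(y_1,\dots,y_A)\,h_v(z_1,\dots,z_B).
\]

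Identifying the polynomial part (i.e. the terms with $0\le i\le A-B = C$) of this Laurent series with $Q_{n,h,l}(j,k,d)$ gives formula \eqref{Desc0} after the re-indexing $i\mapsto C-i$ to organize the sum by ascending powers of $d$. Setting $i=0$ and extracting the constant term in $d$ then yields \eqref{Desc1} with the explicit prefactor $(-1)^C a_A/b_B$ computed above. I do not expect any real obstacle: the only delicate point is to justify that the non-negative power part of the Laurent expansion of $P_{n,h}/D_{n,h,l}$ is exactly $Q_{n,h,l}$, which follows from the strict inequality $\deg R_{n,h,l}<B$ together with the fact that the formal Laurent expansion of a rational function is unique once we fix the direction (towards $d=\infty$).
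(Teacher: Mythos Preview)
Your proposal is correct and follows essentially the same route as the paper: factor $P_{n,h}$ and $D_{n,h,l}$ over their roots, expand each factor as a series in $1/d$ via the generating functions for the $e_i$ and $h_i$, multiply, and identify the polynomial part of the resulting Laurent expansion with the quotient $Q_{n,h,l}$. Your explicit justification that the remainder contributes only strictly negative powers of $d$ (because $\deg R_{n,h,l}<B$) is exactly the point the paper summarizes by saying ``the quotient $Q_{n,h,l}(j,k,d)$ of the polynomial division in the LHS is the polynomial part of the RHS in the variable $d$''.
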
 
\begin{proof}
 One obviously has:
\begin{equation*}
P_{n,h}(j,k,d) = a_Ad^A \prod_{v=1}^A(1-y_v/d) = a^Ad^A\sum_{v=0}^{\infty}(-1)^ve_{v}(y_1,\dots, y_A)\frac{1}{d^v},
\end{equation*}
where we set $e_{v}(y_1,\dots, y_A) = 0$ for any $v > A$, and (for large enough $d$):
\begin{equation*}
\frac{1}{D_{n,h}(j,k,d)} = \frac{1}{b_Bd^{B}}\prod_{v=1}^B(1-z_v/d) = \frac{1}{b_Bd^{B}}\sum_{v=0}^{+\infty} h_{v}(z_1,\dots, z_B)\frac{1}{d^v}.
\end{equation*}
As a result, 
\begin{equation*}
\frac{P_{n,h}(j,k,d)}{D_{n,h}(j,k,d)} = d^{A-B}\frac{a_A}{b_B}\sum_{i=0}^{+\infty} \frac{(-1)^i}{d^i}\sum_{v=0}^i (-1)^ve_{i-v}(y_1,\dots, y_A) h_{v}(z_1,\dots, z_B).
\end{equation*}
But the quotient $Q_{n,h,l}(j,k,d)$ of the polynomial division in the LHS is the polynomial part of the RHS in the variable $d$ whence the first expression of $Q_{n,h,l}(j,k,d)$ follows. Performing the index change $i \mapsto A-B-i=C-i$, one gets the second one. 

Finally, the leading coefficients of $P_{n,h}(j,k,d)$ and $D_{n,h}(j,k,d)$ are given by
\begin{equation*}
a_A = \theta^n (1-\theta)^h(\lambda \theta)^n, \quad b_B = (\lambda \theta) \theta^h,
\end{equation*}
respectively. Extracting the constant term coefficient in \eqref{Desc0}, the lemma is proved. 
 \end{proof}

\begin{rem}
Recalling the representation \eqref{Q-Exp}
\begin{equation*}
Q_{n,h,l}(j,k,d) = \sum_{i=0}^{n-l-1}q_{n-l-1-i}^{(n,h)}(j,k)d^i = \sum_{i=0}^{C}q_{C-i}^{(n,h)}(j,k)d^i,  
\end{equation*}
we readily infer from \eqref{Desc0} that 
\begin{equation*}
q_{C-i}^{(n,h)}(j,k)= (-1)^{C-i}\sum_{v=0}^{C-i}(-1)^ve_{C-i-v}(y_1,\dots, y_A) h_{v}(z_1,\dots, z_B), \quad 0 \leq i \leq C,
\end{equation*}
On the other hand, the roots $(y_1, \dots, y_A)$ are given by: 
\begin{equation}\label{root1}
y_{i}=
\begin{cases}
2j-2h+1, & i=1\\
2j+1, & i=2\\
j-i+3, & 3\le i\le h+2\\
(j+h-i+3)/(1-\theta), & h+3\le i\le 2h+2\\
(j-k+n+h-i+3)/(\lambda \theta), & 2h+3\le i\le n+2h+2\\
(j-k+2n+h-i+3)/\theta, & n+2h+3\le i\le 2n+2h+2=A
\end{cases},
\end{equation}
while 
\begin{equation}\label{root2}
z_{i}=
\begin{cases}
0, & 1\le i\le l+1\\
2j-h+1, & i=l+2\\
(j-i+l+3)/\theta, & l+3\le i\le h+l+2\\
2j-k+n+l-i+4, & h+l+3\le i\le n+2h+l+3 = B
\end{cases}.
\end{equation}
As a matter fact, the simple monomial $j$ is present in all the roots except $z_i, 1\le i\le l+1$. Since $h_v$ and $e_{C-i-v}$ are homogenous polynomials (in their variables) of orders $v$ and $C-i-v$ respectively, then \eqref{Desc0} shows that $q_{C-i}^{(n,h)}(j,k)$ 
is a polynomial of degree at most $C-i$ in the variable $j$. As to the simple monomial $k$, it is present in more than $C$ roots. Consequently, the same conclusion holds regarding the degree of $q_{C-i}^{(n,h)}(j,k)$ in the variable $k$ and we get another proof of Lemma \ref{degree}.
\end{rem} 

The formula \eqref{Desc1} is clearly symmetric in each of the alphabets $\mathcal{Y}:= (y_1, \dots, y_A)$ and $\mathcal{Z}:= (z_1, \dots, z_B)$. In the realm of algebraic combinatorics, a function enjoying this property is referred to as doubly symmetric. Moreover, $q_C$ may be seen as a completely homogeneous function in an another alphabet formed out of $\mathcal{Y}$ and $\mathcal{Z}$. Indeed, appealing again to the homogeneity of $h_i$, we write
\begin{align*}
q_C &= (-1)^C \frac{(1-\theta)^h(\lambda \theta^2)^n}{\theta^h(\lambda\theta)} \sum_{i=0}^C h_i(-z_1,\dots, -z_B) e_{C-i}(y_1,\dots, y_A)
\\& = (-1)^C \frac{(1-\theta)^h(\lambda \theta^2)^n}{\theta^h(\lambda\theta)} \sum_{i=0}^C h_i(-z_1,\dots, -z_B, \underbrace{0, \dots, 0}_{C \textrm{times}}) e_{C-i}(y_1,\dots, y_A).
\end{align*}
Moreover, Lemma 2.1 and formula (18) in \cite{Gou-Gre} entail:
\begin{cor}
The constant term coefficient $q_C$ admits the following representation:
\begin{equation*}
q_C^{(n,h)}(j,k)  = (-1)^C \frac{(1-\theta)^h(\lambda \theta^2)^n}{\theta^h(\lambda\theta)} \sum_{1 \leq \tau_1 \leq \tau_2 \leq \dots \leq \tau_C \leq A} \prod_{i=1}^C(y_{\tau_i + i-1} - z_{\tau_i}),
\end{equation*}
where it is understood that $y_i = 0, i > A$ and $z_i = 0, i > B$. 
\end{cor}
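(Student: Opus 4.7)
The plan is to read the corollary off the last displayed identity of the preceding paragraph by invoking a single result of \cite{Gou-Gre}. Recall that, after using the homogeneity $(-1)^v h_v(z_1,\ldots,z_B)=h_v(-z_1,\ldots,-z_B)$ in \eqref{Desc1} and padding by $C$ zeros as was already done above, one has the representation
\begin{equation*}
q_C^{(n,h)}(j,k) = (-1)^C \frac{(1-\theta)^h(\lambda \theta^2)^n}{\theta^h(\lambda\theta)} \sum_{i=0}^C h_i(-z_1,\ldots,-z_B,\underbrace{0,\ldots,0}_{C\text{ times}})\, e_{C-i}(y_1,\ldots,y_A).
\end{equation*}
This inner sum is exactly the one-row convolution of complete homogeneous and elementary symmetric polynomials on two finite alphabets for which Lemma 2.1 together with formula (18) in \cite{Gou-Gre} provides a product-over-monotone-sequence expansion.

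My approach is then to invoke that result directly with the two alphabets $\mathcal{Y}=(y_1,\ldots,y_A)$ and $\mathcal{Z}=(z_1,\ldots,z_B)$ supplied by \eqref{root1}--\eqref{root2}. The outcome is that the inner sum becomes
\begin{equation*}
\sum_{1\le\tau_1\le\cdots\le\tau_C\le A}\prod_{i=1}^C (y_{\tau_i+i-1}-z_{\tau_i}),
\end{equation*}
under the standing convention $y_i=0$ for $i>A$ and $z_i=0$ for $i>B$; this convention simultaneously accounts for the finiteness of $\mathcal{Y}$ and $\mathcal{Z}$ and for the zero-padding of the $\mathcal{Z}$-alphabet carried out above. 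Multiplying by the unchanged scalar prefactor then gives the stated identity.

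The main obstacle is purely notational, namely aligning the indexing conventions of \cite{Gou-Gre} with those of the present paper. Concretely, one has to verify: (i) that the roles of the $e$-alphabet and the $h$-alphabet in formula (18) correspond respectively to $\mathcal{Y}$ and $-\mathcal{Z}$ here, so that the correct factors are $y_{\tau_i+i-1}-z_{\tau_i}$ rather than $y_{\tau_i}-z_{\tau_i+i-1}$; (ii) that extending $y_i$ and $z_i$ by zeros past their nominal ranges is compatible with the Goulden-Greene setup, and in particular that the shift $\tau_i\mapsto\tau_i+i-1$ is allowed to exceed $A$; and (iii) that the degree parameter $C=n-l-1$ is indexed in the same way on both sides. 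Once these three bookkeeping points are lined up, no further computation is needed and the corollary is immediate.
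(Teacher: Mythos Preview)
Your proposal is correct and follows essentially the same approach as the paper: the paper's entire proof of this corollary is the single sentence ``Lemma 2.1 and formula (18) in \cite{Gou-Gre} entail,'' applied to the zero-padded representation displayed just before the statement. Your additional bookkeeping remarks about aligning the alphabet roles, the zero-padding, and the degree parameter are reasonable elaborations of what the paper leaves implicit, but the substance is identical.
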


\section{On the coefficient $c_{n,h,l}^{(\lambda,\theta)}$}
The limiting moment formula stated in Theorem \ref{Main} depends only on the coefficient $c_{n,h,l}^{(\lambda,\theta)}$ of the monomial $k^{n-h}j^{h-l-1}$ in the $(j,k)$-polynomial $q_{n-l-1}^{(n,h)}(j,k)$. The extraction of this coefficient may obtained from the above representations of $q_{n-l-1}^{(n,h)}(j,k)$ by successive differentiations with respect to $(j,k)$ (viewed as real variables). In this section, we shall rather use \eqref{q-coef} to compute inductively  $c_{n,h,l}^{(\lambda,\theta)}$ for any 
$(\lambda, \theta)$. As a warm up, we firstly investigate the two special (univariate) cases corresponding to $n=h$ and to $l=h-1$, for which the coefficients of $j^{n-l-1}$ and of $k^{n-l-1}$ in $q_{n-l-1}^{(n,h)}(j,k)$ are constant (by the virtue of the second statement of Lemma \ref{degree}). Doing so reveals further the role played by the special values $\lambda = 1$ and $\theta = 1/2$. For instance, we determine explicitly $c_{n,h,h-1}^{(\lambda,\theta)}$ whose expression considerably simplifies when 
$\lambda =1$ without any restriction on $\theta$. The same holds for $c_{n,n,l}^{(\lambda,1/2)}$ while $c_{n,n,l}^{(\lambda,\theta)}$ is expressed through complete ordinary Bell polynomials.

\subsection{The case $n=h$}
For any $0 \leq i \leq n-l-1$, let $X_i^{(n,\lambda,\theta)}$ denote the coefficient of $j^i$ in $q_{i}^{(n,n)}(j,k)$, so that 
\begin{equation*}
c_{n,n,l}^{(\lambda,\theta)}= X_{n-l-1}^{(n,\lambda,\theta)}.
\end{equation*} 
Then 
\begin{pro}\label{case n=h}
	We have, $X_0^{(n,\lambda,\theta)}=(\lambda\theta)^{n-1}(1-\theta)^n$ and for any $1\le i \le n-l-1$, $X_i^{(n,\lambda,\theta)}$ satisfies
	\begin{align}\label{n=h}
		X_i^{(n,\lambda,\theta)}=&\alpha_i	-\sum _{v=0}^{i-1} \beta_{i-v}^{(n,\theta)}  X_v^{(n,\lambda,\theta)}
	\end{align}
where
\begin{equation*}
	\alpha_i^{(n,\lambda,\theta)} = (-1)^i\sum _{i_1+i_2+i_3+i_4+i_5=i}   \binom{2}{i_1}  \binom{n}{i_2} \binom{n}{i_3} \binom{n}{i_4}  \binom{n}{i_5}2^{i_1}(\lambda\theta)^{n-1-i_3} (1-\theta)^{n-i_4}  \theta^{-i_5}
\end{equation*}
and
\begin{equation*}
	\beta_{i-v}^{(n,\theta)}=(-1)^{i-v} \sum _{i_1+i_2+i_3=i-v} \binom{1}{i_1} \binom{n}{i_2} \binom{2 n+1}{i_3} 2^{i_1+i_3} \theta^{-i_2}, \quad 0 \leq v \leq i-1.
\end{equation*}
\end{pro}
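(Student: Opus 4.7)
The plan is to apply the recurrence \eqref{q-coef} specialized to $h=n$, $k=0$, and to extract the coefficient of $j^i$ on both sides. Since $c_{n,n,l}^{(\lambda,\theta)}$ is the coefficient of $k^0\,j^{n-l-1}$ in $q_{n-l-1}^{(n,n)}(j,k)$, we have $c_{n,n,l}^{(\lambda,\theta)}=[j^{n-l-1}]\,q_{n-l-1}^{(n,n)}(j,0)=X_{n-l-1}^{(n,\lambda,\theta)}$, so it suffices to prove the claimed recurrence for the $X_i$'s. Reading the roots off from \eqref{root1}--\eqref{root2} with $h=n$, $k=0$, every root $y_s$ of $P_{n,n}(j,0,\cdot)$ and every $z_s$ of $D_{n,n,l}(j,0,\cdot)$ is affine in $j$; I would denote by $\eta_s$ and $\zeta_s$ their respective $j$-slopes. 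A direct tabulation yields: $\{\eta_s\}$ consists of $2$ with multiplicity $2$, $1$ with multiplicity $n$, and $1/(1-\theta)$, $1/(\lambda\theta)$, $1/\theta$ each with multiplicity $n$ (adding up to $A=4n+2$), while the nonzero part of $\{\zeta_s\}$ consists of $2$ with multiplicity $1$ (from $z_{l+2}$), $1/\theta$ with multiplicity $n$, and $2$ with multiplicity $2n+1$ (from $z_{n+l+3},\dots,z_{3n+l+3}$), the remaining $l+1$ values being zero and thus inert in any elementary symmetric function.

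Writing $P_{n,n}(j,0,d)=a_A\prod_s(d-y_s)$ and $D_{n,n,l}(j,0,d)=b_B\prod_s(d-z_s)$ with $a_A=\theta^n(1-\theta)^n(\lambda\theta)^n$ and $b_B=(\lambda\theta)\theta^n$, I identify the polynomial coefficients $\tilde a_{A-i}(j,0)=(a_A/b_B)(-1)^i\,e_i(y_1,\dots,y_A)$ and $\tilde b_{B-(i-v)}(j,0)=(-1)^{i-v}\,e_{i-v}(z_1,\dots,z_B)$. By affinity of each root in $j$, the top $j$-coefficient of an elementary symmetric function evaluated on the roots is obtained by replacing each root by its slope: $[j^i]\tilde a_{A-i}(j,0)=(a_A/b_B)(-1)^i\,e_i(\eta_1,\dots,\eta_A)$ and $[j^{i-v}]\tilde b_{B-(i-v)}(j,0)=(-1)^{i-v}\,e_{i-v}(\zeta_1,\dots,\zeta_B)$.

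The crux is then a simple degree count. By Lemma \ref{degree}, $q_v^{(n,n)}(j,0)$ has $j$-degree at most $v$, and by the previous paragraph $\tilde b_{B-(i-v)}(j,0)$ has $j$-degree at most $i-v$. Hence in the product $\tilde b_{B-(i-v)}(j,0)\,q_v^{(n,n)}(j,0)$ the only monomial pairing producing $j^i$ is leading $\times$ leading, contributing $(-1)^{i-v}\,e_{i-v}(\zeta)\,X_v$. Applying $[j^i]$ to both sides of \eqref{q-coef} therefore collapses the polynomial recurrence to the closed recurrence in the leading coefficients
\begin{equation*}
X_i^{(n,\lambda,\theta)}=\frac{a_A}{b_B}(-1)^i\,e_i(\eta_1,\dots,\eta_A)-\sum_{v=0}^{i-1}(-1)^{i-v}\,e_{i-v}(\zeta_1,\dots,\zeta_B)\,X_v^{(n,\lambda,\theta)}.
\end{equation*}

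It remains to match this with the stated form. The initial value follows from the $i=0$ case of \eqref{q-coef}: $X_0=\tilde a_A(j,0)=a_A/b_B=(\lambda\theta)^{n-1}(1-\theta)^n$. For $i\ge 1$, I would expand the two elementary symmetric functions on their multisets via the generalized Chu--Vandermonde identity
\begin{equation*}
e_k(x_1^{a_1},\dots,x_r^{a_r})=\sum_{i_1+\cdots+i_r=k}\binom{a_1}{i_1}\cdots\binom{a_r}{i_r}\,x_1^{i_1}\cdots x_r^{i_r},
\end{equation*}
and absorb the prefactor $a_A/b_B=(\lambda\theta)^{n-1}(1-\theta)^n$ into the negative-exponent powers, so that $(\lambda\theta)^{n-1}(\lambda\theta)^{-i_j}=(\lambda\theta)^{n-1-i_j}$ and $(1-\theta)^{n}(1-\theta)^{-i_{j'}}=(1-\theta)^{n-i_{j'}}$. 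After an innocuous relabeling of summation indices, this yields exactly the expressions for $\alpha_i$ and $\beta_{i-v}^{(n,\theta)}$ in the statement. I expect the only genuinely delicate step to lie in the careful bookkeeping of the multisets $\{\eta_s\}$ and $\{\zeta_s\}$ from \eqref{root1}--\eqref{root2}; once these are in hand, the rest is routine algebra.
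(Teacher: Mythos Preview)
Your argument is correct and follows essentially the same route as the paper: both start from the recurrence \eqref{q-coef} with $h=n$, identify $\alpha_i$ and $\beta_{i-v}$ as the leading $j$-coefficients of $\tilde a_{A-i}$ and $\tilde b_{B-(i-v)}$ (which are elementary symmetric polynomials in the roots \eqref{root1}--\eqref{root2}), and use Lemma~\ref{degree} to see that only the top-degree pairing survives in the convolution term. Your ``slope'' observation---that $[j^i]e_i(y_1,\dots,y_A)=e_i(\eta_1,\dots,\eta_A)$ when every $y_s$ is affine in $j$---and the explicit Chu--Vandermonde expansion on the resulting multisets make precise what the paper leaves as ``readily follows from \eqref{root1} and \eqref{root2}'', but the underlying mechanism is the same.
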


\begin{proof}
From Lemma \ref{degree}, $X_0^{(n,\lambda,\theta)}$ is the highest degree coefficient $q_0^{(n,h)}$ in $Q_{n,h,l}(j.k,d)$ and we infer from \eqref{q-coef} together with the definition of $P_{n,h,l}(j, k,d)$ that:
\begin{equation*}
 q_{0}^{(n,h)}(j,k) = \tilde{a}_{2n+2h+2}^{(n,h)}(j,k) = (\lambda\theta)^{n-1}\theta^{n-h}(1-\theta)^n
 \end{equation*}
so that  $X_0^{(n,\lambda,\theta)} = q_{0}^{(n,h)}(j,k) = (\lambda\theta)^{n-1}(1-\theta)^n$. 

Appealing again to \eqref{q-coef}, we get \eqref{n=h} where $\alpha_i^{(n,\lambda,\theta)}$ and $\beta_{i-v}^{(n,\lambda,\theta)}, 0 \leq v \leq i-1,$ are the coefficients of $j^i$ and of $j^{i-v}$ in 
$\tilde{a}_{2n+2h+l+2-i}^{(n,h)}(j,k)$ and in $\tilde{b}_{n+2h+l+3-i+v}^{(n,h)}(j,k)$ respectively.
Indeed, the first statement of Lemma \ref{degree} shows that $q_v^{(n,h)}(j,k), 1\le v \le i-1,$ is a polynomial of degree at most $v$ in the variable $j$. Moreover,
	\begin{align}\label{Sym1}
		\tilde{a}_{2n+2h+2-i}^{(n,h)}(j,k) &= (-1)^i(\lambda\theta)^{n-1}\theta^{n-h}(1-\theta)^h\sum_{1\le j_1< \dots  j_i \le 2n+2h+2}y_{j_1}\ldots y_{j_i}, \quad 1 \leq i \leq n-l-1,
	\end{align}
	and
	\begin{equation}\label{Sym2}
		\tilde{b}_{n+2h+l+3-i+v}^{(n,h)}(j,k)=(-1)^{i-v} \sum_{1\le j_1 < \dots < j_{i-v}\le n+2h+l+3}z_{j_1}\ldots z_{j_{i-v}},  \quad 0 \leq v \leq i-1,
	\end{equation}
are elementary symmetric polynomials in the roots $(y_1,\dots, y_{2n+2h+1})$ and $(z_1, \dots, z_{n+2h+l+3})$ respectively. As such, they are polynomials of degrees $i$ and $i-v$ in each of the variables $j$ and $k$. 

Finally, the expressions of $\alpha_i$ and of $\beta_{i-v}$ readily follow from \eqref{root1} and \eqref{root2}, keeping in mind $n=h$. The proposition is proved.
\end{proof}

\subsection{The case $\lambda =1, \theta =1/2$.}
If $\theta =1/2$ then the generalized Chu-Vandermonde identity entails:
\begin{align*}
\beta_{i-v}^{(n,1/2)} & =(-1)^{i-v} \sum _{i_1+i_2+i_3=i-v} \binom{1}{i_1} \binom{n}{i_2} \binom{2 n+1}{i_3} 2^{i_1+i_2+i_3} 
\\& = (-2)^{i-v} \sum _{i_1+i_2+i_3=i-v} \binom{1}{i_1} \binom{n}{i_2} \binom{2 n+1}{i_3}
\\& =  (-2)^{i-v}\binom{3n+2}{i-v}.
\end{align*}
Since $\beta_0=1$ then the recurrence relation \eqref{n=h} may be then written as: 
\begin{equation*}
\sum _{v=0}^{i}  (-2)^{i-v}\binom{3n+2}{i-v} X_v^{(n,\lambda,1/2)} = \alpha_i^{(n,\lambda,1/2)} ,
\end{equation*}
and is inverted as: 
\begin{equation*}
X_i^{(n,\lambda,1/2)} = \sum_{v=0}^i \frac{(3n+2)_{i-v}}{(i-v)!}\alpha_i^{(n,\lambda,1/2)} , \quad 0 \leq i \leq n-l-1.
\end{equation*}
In particular, 
\begin{equation*}
X_{n-l-1}^{(n,\lambda,1/2)}= \sum_{v=0}^{n-l-1} \frac{(3n+2)_{n-l-1-v}}{(n-l-1-v)!}\alpha_v^{(n,\lambda,\theta)}.
\end{equation*}
If further $\lambda = 1$ then 
\begin{align*}
\alpha_i^{(n,1,1/2)} &= \frac{(-1)^i}{2^{2n-1}}\sum _{i_1+i_2+i_3+i_4+i_5=i}   \binom{2}{i_1}  \binom{n}{i_2} \binom{n}{i_3} \binom{n}{i_4}  \binom{n}{i_5}2^{i_1+i_3+i_4+i_5}
\\& = \frac{(-1)^i}{2^{2n-1}}\sum_{i_2=0}^i\binom{n}{i_2} 2^{i-i_2} \sum _{i_1+i_3+i_4+i_5=i-i_2}   \binom{2}{i_1}   \binom{n}{i_3} \binom{n}{i_4}  \binom{n}{i_5}
\\& = \frac{(-1)^i}{2^{2n-1}}\sum_{i_2=0}^i\binom{n}{i_2} \binom{3n+2}{i-i_2} 2^{i-i_2} 
\end{align*}
where the last equality follows again from the generalized Chu-Vandermonde identity. In this case, \eqref{n=h} simplifies further to
\begin{equation*}
\sum _{v=0}^{i}  (-1)^{v}\binom{3n+2}{i-v} 2^{i-v}X_v^{(n,1,1/2)}  = \frac{1}{2^{2n-1}}\sum_{v=0}^i\binom{n}{v} \binom{3n+2}{i-i_2} 2^{i-v}. 
\end{equation*}
and the uniqueness of its solution (subject to the initial condition $X_0= 1/2^{2n-1}$) clearly yields
\begin{equation*}
X_v^{(n,1,1/2)} = \frac{(-1)^v}{2^{2n-1}} \binom{n}{v}. 
\end{equation*}
As a result, 
\begin{equation*}
c_{n,n,l}^{(1,1/2)}= X_{n-l-1}^{(n,1,1/2)}  = \frac{(-1)^{n-l-1}}{2^{2n-1}} \binom{n}{n-l-1} = \frac{(-1)^{n-l-1}}{2^{2n-1}} \binom{n}{l+1},
\end{equation*}
whence we deduce that the $h=n$ term of the sum displayed in the RHS of Theorem \ref{Main} reads 
\begin{equation*}
\frac{1}{2^{2n-1}}\frac{e^{-nt}}{n}\sum_{l=0}^{h-1} \frac{(-2nt)^{l}}{l!}\binom{n}{l+1} = \frac{e^{-nt}}{2^{2n-1}n}L_{n-1}^{(1)}(2nt),
\end{equation*}
which agrees with \eqref{SpecCas}. More generally,

\begin{pro}\label{Toeplitz}
	For all $0\le l\le n-1$, 	we have
	\begin{equation*}
		c_{n,n,l}^{(\lambda,\theta)}=\sum_{v=0}^{n-l-1}\alpha_v^{(n,\lambda,\theta)} \gamma_{n-l-1-v}^{(n,\theta)}
	\end{equation*} 
	where
	\begin{equation*}
		\gamma_a^{(n,\theta)} =\sum _{\substack{v_1 \geq 0, \dots, v_a \geq 0 \\ v_1+2s_2\ldots+av_a=a}} \frac{(v_1+\cdots + v_a)!}{v_1!\ldots v_a!}(-\beta_1^{(n,\theta)})^{v_1}\ldots (-\beta_a^{(n,\theta)})^{v_a}, \quad 0 \leq a \leq n-l-1.
	\end{equation*}
\end{pro}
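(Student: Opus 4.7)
The plan is to recognize the recurrence from Proposition \ref{case n=h} as a Toeplitz convolution and invert it via formal power series. Rewriting \eqref{n=h} with the convention $\beta_0^{(n,\theta)} = 1$, we get the clean convolution identity
\begin{equation*}
\sum_{v=0}^i \beta_{i-v}^{(n,\theta)} X_v^{(n,\lambda,\theta)} = \alpha_i^{(n,\lambda,\theta)}, \qquad 0 \leq i \leq n-l-1.
\end{equation*}
Introducing the formal generating series $A(x) = \sum_{i \geq 0} \alpha_i^{(n,\lambda,\theta)} x^i$, $B(x) = \sum_{i \geq 0} \beta_i^{(n,\theta)} x^i$, and $X(x) = \sum_{i \geq 0} X_i^{(n,\lambda,\theta)} x^i$, the convolution becomes $B(x) X(x) = A(x)$, and since $\beta_0 = 1$ the series $B(x)$ is invertible in $\mathbb{R}[[x]]$, yielding $X(x) = A(x)/B(x)$.

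The next step is to expand $1/B(x)$ explicitly. Writing $B(x) = 1 - (-\sum_{k \geq 1} \beta_k^{(n,\theta)} x^k)$ and applying the geometric series followed by the multinomial theorem gives
\begin{equation*}
\frac{1}{B(x)} = \sum_{j \geq 0}\Bigl(-\sum_{k \geq 1}\beta_k^{(n,\theta)} x^k\Bigr)^{\!j} = \sum_{a \geq 0} x^a \sum_{\substack{v_1,\dots,v_a \geq 0 \\ v_1+2v_2+\cdots+a v_a = a}} \frac{(v_1+\cdots+v_a)!}{v_1!\cdots v_a!} \prod_{k=1}^a (-\beta_k^{(n,\theta)})^{v_k},
\end{equation*}
and one recognizes the inner sum as precisely $\gamma_a^{(n,\theta)}$ (i.e. the complete ordinary Bell polynomial evaluated at $-\beta_1^{(n,\theta)},\dots,-\beta_a^{(n,\theta)}$, consistent with the keyword list of the paper).

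Finally, extracting the coefficient of $x^{n-l-1}$ in $X(x) = A(x) \cdot (1/B(x))$ by Cauchy's product formula gives
\begin{equation*}
X_{n-l-1}^{(n,\lambda,\theta)} = \sum_{v=0}^{n-l-1} \alpha_v^{(n,\lambda,\theta)} \gamma_{n-l-1-v}^{(n,\theta)},
\end{equation*}
and the identity $c_{n,n,l}^{(\lambda,\theta)} = X_{n-l-1}^{(n,\lambda,\theta)}$ from the definition preceding Proposition \ref{case n=h} closes the argument. There is no real obstacle: the only subtlety is bookkeeping the sign convention (absorbing $(-1)^{v_1+\cdots+v_a}$ into the $(-\beta_k)^{v_k}$ factors so that the resulting expression matches the stated definition of $\gamma_a^{(n,\theta)}$), together with checking that the augmented convention $\beta_0 = 1$ is compatible with the inductive form of \eqref{n=h}, which indeed it is since the $v=i$ term of the convolution isolates $X_i$.
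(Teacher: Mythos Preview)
Your proof is correct and follows essentially the same approach as the paper: both recognize the recurrence \eqref{n=h} as a lower-triangular Toeplitz convolution and invert it to obtain the Bell-polynomial expression for $\gamma_a^{(n,\theta)}$. The only cosmetic difference is that the paper phrases the inversion as a matrix identity and cites Corollary~1 of \cite{Mer}, whereas you carry it out directly via formal power series and the multinomial theorem.
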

\begin{proof}
The system of equations \eqref{n=h} may be written as 
	\begin{equation*}
		\left(\begin{array}{c}
			\alpha_0^{(n,\lambda,\theta)} \\ 
			\alpha_1^{(n,\lambda,\theta)} \\
			\vdots \\
			\alpha_{n-l-1}^{(n,\lambda,\theta)}
		\end{array}\right) = 
		\underbrace{\left(\begin{array}{cccc}
				1 & 0 & \dots & 0 \\ 
				\beta_{1}^{(n,\theta)} & 1 & \dots & 0  \\ 
				\vdots & \vdots & \ddots & \vdots \\
				\beta_{n-l-1}^{(n,\theta)} &\dots & \dots & 1 
			\end{array}
			\right)}_{\mathcal{B}}\left(\begin{array}{c}
			X_0^{(n,\lambda,\theta)} \\ 
			X_1^{(n,\lambda,\theta)} \\
			\vdots \\
			X_{n-l-1}^{(n,\lambda,\theta)}
		\end{array}\right)
	\end{equation*}
	where $\mathcal{B}$ is a lower triangular invertible Toeplitz matrix. Appealing to Corollary 1 in \cite{Mer}, we obtain the expression of the inverse matrix $\mathcal{B}^{-1}$ whence the formula for $c_{n,n,l}^{(\lambda,\theta)}$ follows. 
	\end{proof}

\begin{rem}
For any $1 \leq a \leq n-l-1$, $\gamma_a^{(n,\theta)}$ is a complete ordinary Bell polynomial in the variables $(-\beta_1^{(n,\theta)}, \dots, -\beta_a^{(n,\theta)})$. 
\end{rem}

\subsection{The case $l=h-1$}
This case seems a bit easier than the previous one and the proof of the following proposition is similar to that of Proposition \ref{n=h}.
\begin{pro}\label{case l=h-1}
	For all $1\le h\le n$, we have
	\begin{equation*}
		c_{n,h,h-1}^{(\lambda,\theta)}= (1-\theta)^h(\lambda\theta)^{n-1}(-\theta)^{n-h}\binom{2n}{n-h}\sum_{v=0}^{n-h} \frac{(h-n)_v(-n)_v}{(-2n)_v v!\theta^{v}}{}_2F_1(-v,-n,n-v+1; 1/\lambda),
	\end{equation*}
	where ${}_2F_1$ is the Gauss hypergeometric function. In particular, for any $n\ge1$, we have
	\begin{equation*}
		c_{n,h,h-1}^{(1,\theta)} =	(1-\theta)^n \theta^{n-1} \binom{2n}{n-h}.
	\end{equation*}
\end{pro}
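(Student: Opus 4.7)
The plan is to mirror the proof of Proposition \ref{case n=h}, tracking $[k^i]q_i^{(n,h)}$ instead of $[j^i]q_i^{(n,h)}$. By the second statement of Lemma \ref{degree}, this $k^i$-coefficient is constant in $j$; write it as $Y_i^{(n,h,\lambda,\theta)}$, so that $c_{n,h,h-1}^{(\lambda,\theta)} = Y_{n-h}^{(n,h,\lambda,\theta)}$. Extracting $[k^i]$ from both sides of \eqref{q-coef}, a degree count in $(j,k)$ via Lemma \ref{degree} forces the Cauchy product $\tilde{b}_{n+2h+l+3-i+v}^{(n,h)}\,q_v^{(n,h)}$ to contribute only through the top pairing $[k^{i-v}]\tilde{b}\cdot[k^v]q_v$, both factors being constant in $j$. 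Identifying the $k$-containing roots via \eqref{root1} ($n$ roots $y_j$ with $k$-coefficient $-1/(\lambda\theta)$ and $n$ more with $-1/\theta$) and \eqref{root2} ($n+h+1$ roots $z_j$, each with $k$-coefficient $-1$) yields the triangular Toeplitz system
\[
\sum_{v=0}^i \binom{n+h+1}{i-v}\, Y_v^{(n,h,\lambda,\theta)} = (1-\theta)^h(\lambda\theta)^{n-1}\theta^{n-h}\sum_{a+b=i}\binom{n}{a}\binom{n}{b}(\lambda\theta)^{-a}\theta^{-b},\qquad 0\le i\le n-h.
\]

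Rather than inverting this system term-by-term as in Proposition \ref{Toeplitz}, I would pass to generating functions. The kernel sums to $(1+x)^{n+h+1}$ and the right-hand side factors via the binomial theorem, so
\[
Y(x) := \sum_{i\geq 0}Y_i^{(n,h,\lambda,\theta)}x^i = \frac{(1-\theta)^h(\lambda\theta+x)^n(\theta+x)^n}{\lambda\theta^{h+1}(1+x)^{n+h+1}}.
\]
Extracting $[x^{n-h}]Y(x)$ by binomial expansion and reparametrizing the resulting triple sum via $w=a+b$ (so the exponent contributed by $(1+x)^{-(n+h+1)}$ is $m=n-h-w$), I then invoke the identity $\binom{2n-w}{n-h-w}\binom{n}{w}=\binom{2n}{n-h}(-1)^w(h-n)_w(-n)_w/[w!(-2n)_w]$ together with the Chu--Vandermonde type identification
\[
\sum_{a=0}^w\binom{n}{a}\binom{n}{w-a}\lambda^{-a} = \binom{n}{w}\,{}_2F_1(-w,-n;n-w+1;1/\lambda)
\]
(which follows from $\binom{n}{a}=(-1)^a(-n)_a/a!$ and the series definition of the hypergeometric function) to obtain the general formula, modulo careful sign and prefactor bookkeeping.

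For the special case $\lambda=1$, the generating function collapses to $Y(x)=\frac{(1-\theta)^h(\theta+x)^{2n}}{\theta^{h+1}(1+x)^{n+h+1}}$ and the extraction becomes especially clean: the trinomial revision $\binom{2n}{k}\binom{2n-k}{n-h-k}=\binom{2n}{n-h}\binom{n-h}{k}$ factors $\binom{2n}{n-h}$ out of the sum, while $\sum_k\binom{n-h}{k}(-1)^{n-h-k}\theta^{-k}=((1-\theta)/\theta)^{n-h}$ collapses the remainder via the binomial theorem, leaving $(1-\theta)^n\theta^{n-1}\binom{2n}{n-h}$. The main obstacle in the general case is the identification of the inner $\lambda$-convolution with a ${}_2F_1(\,\cdot\,;1/\lambda)$; once this is secured, the rest---the derivation of the recurrence, its resolution by generating functions, and the $\lambda=1$ collapse---is routine.
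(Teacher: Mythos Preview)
Your proposal is correct and follows essentially the same route as the paper: both extract the $k^i$-coefficient from \eqref{q-coef} to obtain the identical triangular system with kernel $\binom{n+h+1}{i-v}$, invert it, and then identify the inner $\lambda$-convolution $\sum_a\binom{n}{a}\binom{n}{w-a}\lambda^{-a}$ with $\binom{n}{w}\,{}_2F_1(-w,-n;n-w+1;1/\lambda)$. The only difference is packaging: you invert via the closed generating function $Y(x)=(1-\theta)^h(\lambda\theta+x)^n(\theta+x)^n/[\lambda\theta^{h+1}(1+x)^{n+h+1}]$, whereas the paper writes the inverse convolution directly using $(-1)^{i-v}(n+h+1)_{i-v}/(i-v)!$. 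Your generating-function viewpoint makes the $\lambda=1$ collapse $(\lambda\theta+x)^n(\theta+x)^n\to(\theta+x)^{2n}$ immediate and avoids the explicit appeal to the Gauss hypergeometric theorem that the paper uses at that step; otherwise the two arguments are the same.
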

\begin{proof}
	Let $Y_i^{(n, h, \lambda, \theta)}$ denote the coefficient $k^i$ in $q_{i}^{(n,h)}(j,k)$ so that $c_{n,h,h-1}^{(\lambda,\theta)}= Y_{n-h}^{(n, h, \lambda, \theta)}$. Then 
	\begin{equation*}
	Y_0=(\lambda\theta)^{n-1}\theta^{n-h}(1-\theta)^h
	\end{equation*}
	and \eqref{q-coef} shows that the sequence $(Y_v^{(n, h, \lambda, \theta)})_{0 \leq v \leq i}$ satisfies
	\begin{equation}\label{l=h-1}
		Y_i^{(n, h, \lambda, \theta)} = \alpha_i^{(n, h, \lambda, \theta)}-\sum _{v=0}^{i-1}  Y_v^{(n, h, \lambda, \theta)}\beta_{i-v}^{(n, h)}, \quad  0\le i\le n-h.
	\end{equation}
where (by the virtue of \eqref{root1}, \eqref{root2}, \eqref{Sym1} and \eqref{Sym2}):
	\begin{align*}
		\alpha_i^{(n, h, \lambda, \theta)} &= (1-\theta)^h\sum _{v=0}^i \binom{n}{v}  \binom{n}{i-v}(\lambda\theta)^{n-1-v} \theta^{n-h-(i-v)} 
		\\& = (1-\theta)^h(\lambda\theta)^{n-1}\theta^{n-h-i}\sum _{v=0}^i \binom{n}{v}  \binom{n}{i-v}(\lambda)^{-v}
	\end{align*}
	and
	\begin{equation*}
		\beta_i^{(n, h)} = \binom{h+n+1}{i}.
	\end{equation*}
The recurrence relation \eqref{l=h-1} may be then written as: 
\begin{equation*}
\sum_{v=0}^i \binom{h+n+1}{i-v}Y_v = (1-\theta)^h(\lambda\theta)^{n-1}\theta^{n-h-i}\sum _{v=0}^i \binom{n}{v}  \binom{n}{i-v}(\lambda)^{-v},
\end{equation*}
and is inverted as follows: for any $0 \leq i \leq n-h$, one has:
\begin{equation*}
Y_{i}^{(n, h, \lambda, \theta)} = (1-\theta)^h(\lambda\theta)^{n-1}\theta^{n-h}\sum_{v=0}^i (-1)^{i-v}\frac{(n+h+1)_{i-v}}{(i-v)!} \theta^{-v} \sum _{a=0}^v \binom{n}{a}  \binom{n}{v-a}(\lambda)^{-a}.
\end{equation*}
Consequently, 
\begin{align*}
Y_{n-h}^{(n, h, \lambda, \theta)} &= \frac{(1-\theta)^h(\lambda\theta)^{n-1}\theta^{n-h}}{(n+h)!}\sum_{v=0}^{n-h} (-1)^{n-h-v}\frac{(2n-v)!}{(n-h-v)!} \theta^{-v} \sum _{a=0}^v \binom{n}{a}  \binom{n}{v-a}(\lambda)^{-a} 
\\& = (1-\theta)^h(\lambda\theta)^{n-1}(-\theta)^{n-h}\binom{2n}{n-h}\sum_{v=0}^{n-h} (-1)^v\frac{(h-n)_v}{(-2n)_v} \frac{\theta^{-v}n!}{v!} \sum _{a=0}^v\binom{n}{a} \frac{v!}{(v-a)!(n-(v-a))!}  (\lambda)^{-a} 
\\& = (1-\theta)^h(\lambda\theta)^{n-1}(-\theta)^{n-h}\binom{2n}{n-h}\sum_{v=0}^{n-h} \frac{(h-n)_v}{(-2n)_v} \frac{\theta^{-v}(-n)_v}{v!}\sum _{a=0}^v\frac{(-n)_a(-v)_a}{a!(n-v+1)_a}  (\lambda)^{-a} 
\\& =  (1-\theta)^h(\lambda\theta)^{n-1}(-\theta)^{n-h}\binom{2n}{n-h}\sum_{v=0}^{n-h} \frac{(h-n)_v(-n)_v}{(-2n)_v v!\theta^{v}} {}_2F_1(-v,-n,n-v+1; 1/\lambda).
 \end{align*}
Finally, if $\lambda = 1$ then 
\begin{equation*}
{}_2F_1(-v,-n,n-v+1; 1) = \frac{(2n-v+1)_v}{(n-v+1)_v} = \frac{(2n!)(n-v)!}{n!(2n-v)!},
\end{equation*}
by the Gauss hypergeometric Theorem, whence 
\begin{align*}
Y_{n-h}^{(n, h, 1, \theta)} &=   (1-\theta)^h(\theta)^{n-1}(-\theta)^{n-h}\binom{2n}{n-h}\sum_{v=0}^{n-h} \frac{(h-n)_v}{v!\theta^{v}} 
\\& =  (1-\theta)^h(\theta)^{n-1}(-\theta)^{n-h}\binom{2n}{n-h}\left(1-\frac{1}{\theta}\right)^{n-h}
\\& = (1-\theta)^n \theta^{n-1} \binom{2n}{n-h}.
\end{align*}
 \end{proof}

\subsection{The general case}
Denote  $Z_{a,b}^{(n, h, \lambda, \theta)}$ the coefficient of $j^ak^b$ in $q_{a+b}^{(n,h)}(j,k)$. Recall also that $c_{n,h,l}^{(\lambda,\theta)}$ is the coefficient of $j^{h-l-1}k^{n-h}$ in 
\begin{equation*}
q_{n-l-1}^{(n,h)}(j,k) =q_{(h-l-1)+(n-h)}^{(n,h)}(j,k),
\end{equation*}
in other words,
\begin{equation*}
	c_{n,h,l}^{(\lambda,\theta)}=Z_{h-l-1,n-h}^{(n, h, \lambda, \theta)}.
\end{equation*}
Appealing once more to \eqref{q-coef}, we prove the following:
\begin{pro}\label{ProGS}
$Z_{0,0}^{(n, h, \lambda, \theta)} = (\lambda\theta)^{n-1}\theta^{n-h}(1-\theta)^h$ and for any $0\le a\le h-l-1$ and  $0\le b\le n-h, (a,b) \neq (0,0)$, one has:
	\begin{equation}\label{General-sys}
		Z_{a,b}^{(n, h, \lambda, \theta)}=A_{a,b}^{(n, h, \lambda, \theta)} -\sum_{\substack{0\le u\le a,\ 0\le v\le b\\0\le u+v\le a+b-1}} Z_{u,v}\  B_{a-u,b-v}^{(n, h, \theta)}
	\end{equation}
where
\begin{multline*}
	A_{a,b}^{(n, h, \lambda, \theta)} =(-1)^a\sum _{i=0}^b \binom{n}{i} \binom{n}{b-i}\\ \sum _{i_1+i_2+i_3+i_4+i_5=a}  \binom{2}{i_1}  \binom{h}{i_2} \binom{h}{i_3}\binom{n-i}{i_4}\binom{n -(b-i)}{i_5}2^{i_1} (1-\theta )^{h-i_3}  (\theta  \lambda )^{n-1-i-i_4}  \theta ^{(n-h) - (b-i)-i_5},
\end{multline*}
and
\begin{equation*}
	B_{a,b}^{(n, h, \theta)}=(-1)^a\binom{n+h+1}{b} \sum _{j_1+j_2+j_3=a} \binom{1}{j_1}\binom{h}{j_2}\binom{n+h+1-b}{j_3} \frac{2^{j_1+j_3}}{\theta^{j_2}}. 
\end{equation*}
\end{pro}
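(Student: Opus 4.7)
My plan is to extract the coefficient of $j^a k^b$ from both sides of the inductive relation \eqref{q-coef}, specialized to $i = a+b$. The base case $(a,b)=(0,0)$ is immediate: \eqref{q-coef} with $i=0$ gives $q_0^{(n,h)} = \tilde{a}_A^{(n,h)} = a_A/[(\lambda\theta)\theta^h]$, and reading off the leading coefficient of $P_{n,h}(j,k,\cdot)$ from its product form yields the announced value $(1-\theta)^h(\lambda\theta)^{n-1}\theta^{n-h}$ for $Z_{0,0}^{(n,h,\lambda,\theta)}$.

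The key structural observation that makes the recurrence close on the diagonal indices $Z_{u,v}$ is a total-degree argument. Vieta's formulas applied to the product expressions of $P_{n,h}$ and $D_{n,h,l}$, together with the fact that every root $y_v$ in \eqref{root1} and $z_v$ in \eqref{root2} is an affine function of $(j,k)$, show that $\tilde{a}_{A-i}^{(n,h)}$ and $\tilde{b}_{B-m}^{(n,h)}$ are polynomials in $(j,k)$ of total degree at most $i$ and $m$ respectively. By a straightforward induction on $v$ through \eqref{q-coef}, $q_v^{(n,h)}$ has total degree at most $v$ as well, which is the strengthened form of Lemma \ref{degree} already invoked in Section 3. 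Consequently, the only contributions to the monomial $j^a k^b$ of total degree $i=a+b$ inside $\tilde{b}_{B-i+v}^{(n,h)}\,q_v^{(n,h)}$ come from multiplying the top (homogeneous) component of $\tilde{b}_{B-i+v}^{(n,h)}$ by the top component of $q_v^{(n,h)}$; and a top-degree monomial $j^u k^{v'}$ in $q_v^{(n,h)}$ satisfies $u+v'=v$ with coefficient, by the very definition of $Z$, equal to $Z_{u,v'}^{(n,h,\lambda,\theta)}$.

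Extracting $[j^ak^b]$ from \eqref{q-coef} with $i=a+b$ and reindexing $v=u+v'$ in the resulting convolution therefore yields exactly \eqref{General-sys}, with the identifications $A_{a,b}^{(n,h,\lambda,\theta)}=[j^ak^b]\bigl(\tilde{a}_{A-(a+b)}^{(n,h)}\bigr)^{\mathrm{lead}}$ and $B_{a,b}^{(n,h,\theta)}=[j^ak^b]\bigl(\tilde{b}_{B-(a+b)}^{(n,h)}\bigr)^{\mathrm{lead}}$, where the superscript ``$\mathrm{lead}$'' denotes the top total-degree homogeneous component.

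It remains to compute these two quantities explicitly. From \eqref{root1}, the linear parts of the $y_v$'s fall into five families $\{2j,\,j,\,j/(1-\theta),\,(j-k)/(\lambda\theta),\,(j-k)/\theta\}$ with multiplicities $(2,h,h,n,n)$. Expanding $e_{a+b}$ at these linear parts as a multinomial sum over a composition $(i_1,i_2,i_3,p_4,p_5)$ of $a+b$, extracting $k^b$ by expanding each $(j-k)$ factor via the binomial theorem, and using the identity $\binom{n}{p}\binom{p}{i}=\binom{n}{i}\binom{n-i}{p-i}$ to pull out an outer summation on the number $i\in\{0,\dots,b\}$ of $k$'s drawn from the type-$4$ family, one recovers the stated formula for $A_{a,b}$. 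The same procedure applied to the $z_v$'s of \eqref{root2}, whose nonzero linear parts form three families $\{2j,\,j/\theta,\,2j-k\}$ with multiplicities $(1,h,n+h+1)$, delivers $B_{a,b}$; since only one $(j-k)$-type family is present, a single outer binomial $\binom{n+h+1}{b}$ survives. The main technical hurdle is precisely this combinatorial bookkeeping --- tracking the sign $(-1)^{a+b}$ from Vieta against the $(-1)^b$ produced in the $k$-extractions from $(j-k)^{p_4+p_5}$ and $(2j-k)^{i_3}$, and arranging the multinomial reindexing so that the cleanly factored form in the statement emerges --- but no ingredient beyond those already used in Propositions \ref{case n=h} and \ref{case l=h-1} is required.
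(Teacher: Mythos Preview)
Your proof is correct and follows essentially the same approach as the paper: both use the total-degree argument (roots affine in $(j,k)$, hence $\tilde a_{A-i}$, $\tilde b_{B-m}$, $q_v^{(n,h)}$ have total degree at most $i,m,v$), extract the coefficient of $j^ak^b$ from \eqref{q-coef} at $i=a+b$, and then compute $A_{a,b}$ and $B_{a,b}$ from the elementary symmetric polynomials in the roots. The only cosmetic difference is bookkeeping: the paper first selects $b$ roots from the two $(j-k)$-families to produce the $k^b$ factor and then $a$ further roots for the $j^a$ factor, whereas you expand $e_{a+b}$ over the five families of linear parts and reorganize via $\binom{n}{p}\binom{p}{i}=\binom{n}{i}\binom{n-i}{p-i}$; these are the same combinatorics.
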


\begin{proof}
We follow the lines of the proof of Propositions \ref{case n=h} and \eqref{l=h-1}: $Z_{0,0}^{(n, h, \lambda, \theta)}$ is the highest degree coefficient $q_0^{(n,h)}(j,k)$ in $Q_{n,h,l}(j.k,d)$ and its expression follows readily from \eqref{Sym1}. As to
$A_{a,b}^{(n, h, \lambda, \theta)}$ and $B_{a,b}^{(n, h, \theta)}$ are nothing else but the coefficients of $j^a k^b$  in $\tilde{a}_{2n+2h+2-a-b}^{(n,h)}$ and $\tilde{b}_{n+2h+l+3-a-b}^{(n,h)}$ respectively, viewed as bivariate polynomials in $(j,k)$. In this respect, 
note from \eqref{Sym1} and \eqref{Sym2} that their total degrees do not exceed $a+b$ since the roots $(y_1, \dots, y_{2n+2h+2})$ and $(z_1, \dots, z_{n+2h+l+3})$ are linear factors in $(j,k)$. The double recurrence relation \eqref{General-sys} follows then from \eqref{q-coef} and it only remains to derive the expressions of $A_{a,b}^{(n, h, \lambda, \theta)}$ and $B_{a,b}^{(n, h, \theta)}$. 

To derive the former, we consider the $2n= n +n$ roots 
\begin{equation*}
y_{i}=
\begin{cases}
(j-k+n+h-i+3)/(\lambda \theta), & 2h+3\le i\le n+2h+2\\
(j-k+2n+h-i+3)/\theta, & n+2h+3\le i\le 2n+2h+2
\end{cases},
\end{equation*}
from which we choose $b$ elements. Obviously, there are
\begin{equation*}
\binom{n}{i} \binom{n}{b-i}, 0 \leq i \leq b,
\end{equation*}
possible choices, each contributing with a factor $(-1)^bk^b/[(\lambda \theta)^i\theta^{b-i}]$. Once a choice is fixed, we choose further $a$ elements from the remaining $2n+2h+2-b$ roots in \eqref{root1}. The contribution of any choice is of the form 
\begin{equation*}
\frac{2^{i_1} 1^{i_2}}{(1-\theta)^{i_3}(\lambda \theta)^{i_4}\theta^{i_5}}
\end{equation*}
where $i_1+i_2+i_3+i_4+i_5 = a$ and $0 \leq i_1 \leq 2$ and there are
\begin{equation*}
\binom{2}{i_1}  \binom{h}{i_2} \binom{h}{i_3}\binom{n-i}{i_4}\binom{n-(b-i)}{i_5}
 \end{equation*}
possible choices. Keeping in mind \eqref{Sym1}, the expression of $A_{a,b}^{(n, h, \lambda, \theta)}$ follows. The derivation of the expression of $B_{a,b}^{(n, h, \theta)}$ is similar, we are done. 
\end{proof}

\begin{rem}[Back to the case $\lambda =1, \theta =1/2$]
When $\lambda=1, \theta = 1/2$, the expressions of $A_{a,b}^{(n, h, 1, 1/2)}$ and of $B_{a,b}^{(n, h, 1/2)}$ reduce to: 
\begin{eqnarray*}
A_{a,b}^{(n, h, 1, 1/2)} & = & \frac{(-1)^a2^b}{2^{2n-1}} \binom{2n}{b}\sum_{i=0}^a2^{a-i}\binom{h}{i}\binom{2n+h+2-b}{a-i}, \\ 
B_{a,b}^{(n, h, 1/2)} &=& (-1)^a 2^a \binom{n+h+1}{b}\binom{n+2h+2-b}{a}.
\end{eqnarray*}
Consequently, \eqref{General-sys} reads: 
\begin{multline}\label{GenSysBis}
\sum_{\substack{0\le u\le a,\ 0\le v\le b\\ u+v\le a+b}} 2^{a-u}(-1)^u\binom{n+h+1}{b-v}\binom{n+2h+2-b+v}{a-u} Z_{u,v}^{(n, h, 1,1/2)}  \\ =  \frac{2^b}{2^{2n-1}}\binom{2n}{b}\sum_{i=0}^{a}2^{a-i}\binom{h}{i}\binom{2n+h+2-b}{a-i}.
\end{multline}
On the other hand, comparing both \eqref{SpecCas} and Theorem \ref{Main}, and using the expansion of the Laguerre polynomial: 
\begin{equation*}
L_{h-1}^{(1)}(2ht) = \sum_{l=0}^{h-1}\frac{(-2ht)^l}{l!}\binom{h}{l+1} = \sum_{l=0}^{h-1}\frac{(-2ht)^l}{l!}\binom{h}{h-l-1},
\end{equation*}
it follows that 
\begin{equation*}
Z_{h-l-1,n-h}^{(n, h, 1,1/2)} = \frac{(-1)^{h-l-1}}{2^{2n-1}} \binom{h}{h-l-1}\binom{2n}{n-h}.
\end{equation*}
However, the general expression of $Z_{u,v}^{(n, h, 1,1/2)}$ for arbitrary $(u,v)$ seems out of reach for the moment (even with the help of computer-assisted computations) though it considerably simplifies when $u=h-l-1, v = n-h$. 
\end{rem}
 
\section{further perspectives: $\theta = 1/2$} 
The expressions of $A_{a,b}^{(n, h, \lambda, \theta)}$ and of $B_{a,b}^{(n, h, \theta)}$ in Proposition \ref{ProGS} suggest considering the value $\theta=1/2$ for arbitrary $\lambda$. In this respect, we still have:   
\begin{equation*}
B_{a,b}^{(n, h, 1/2)} = (-1)^a 2^a \binom{n+h+1}{b}\binom{n+2h+2-b}{a}, 
\end{equation*}
while $A_{a,b}^{(n, h, \lambda, 1/2)}$ reduces to: 
\begin{multline*}
A_{a,b}^{(n, h, \lambda, 1/2)} =\frac{(-1)^a \lambda^{n-1}2^b}{2^{2n-1}}\sum _{i=0}^b \binom{n}{i} \binom{n}{b-i} \frac{1}{\lambda^i}
\sum _{i_1+i_2+i_3+i_4+i_5=a}  \binom{2}{i_1}  \binom{h}{i_2} \binom{h}{i_3}\binom{n-i}{i_4}\binom{n -(b-i)}{i_5}\frac{2^{a-i_2}}{\lambda^{i_4}}.
\end{multline*}
We can write this last expression as follows: 
\begin{pro}
For any $0 \leq a \leq h-l-1, 0 \leq b \leq n-h$,  
\begin{multline*}
A_{a,b}^{(n, h, \lambda, 1/2)} =\frac{(-1)^a \lambda^{n-1}2^b}{2^{2n-1}}\sum_{i_2=0}^{a}\frac{2^{a-i_2}}{\lambda^{a-i_2}} \binom{h}{i_2} \binom{2n+h+2-b}{a-i_2}
\\ \sum _{i=0}^b \binom{n}{i} \binom{n}{b-i} \frac{1}{\lambda^i}{}_2F_1\left(i_2-a, b-i-n-h-2, b-2n-h-2; 1-\lambda\right).
\end{multline*}
\end{pro}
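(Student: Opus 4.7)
The plan is to start from the displayed expression for $A_{a,b}^{(n,h,\lambda,1/2)}$ in the excerpt and simplify the inner quadruple sum over $(i_1,i_3,i_4,i_5)$ subject to $i_1+i_3+i_4+i_5=a-i_2$. The first observation is that the factor $2^{a-i_2}$ depends only on $i_2$, so after swapping the order of summation and pulling $i_2$ to the outside, the task reduces to evaluating, for fixed $i$ and $i_2$,
\begin{equation*}
\sum_{i_1+i_3+i_4+i_5=a-i_2}\binom{2}{i_1}\binom{h}{i_3}\binom{n-i}{i_4}\binom{n-b+i}{i_5}\lambda^{-i_4}.
\end{equation*}

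The next step is to eliminate $i_1,i_3,i_5$ via the threefold Chu--Vandermonde identity, noting that none of the three binomials they index carries $\lambda$. Summing first over those indices (with their sum equal to $a-i_2-i_4$) gives the single binomial coefficient $\binom{n+h+2-b+i}{a-i_2-i_4}$, leaving only
\begin{equation*}
\sum_{i_4=0}^{a-i_2}\binom{n-i}{i_4}\binom{n+h+2-b+i}{a-i_2-i_4}\lambda^{-i_4}.
\end{equation*}
After the substitution $j=a-i_2-i_4$ and factoring out $\lambda^{-(a-i_2)}$, this is recognized as the coefficient of $x^{a-i_2}$ in $(1+x)^{A}(1+\lambda x)^{B}$ with $A=n-i$ and $B=n+h+2-b+i$.

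The key identity to invoke is the Pfaff-type expansion
\begin{equation*}
\sum_{j=0}^m \binom{A}{m-j}\binom{B}{j}\lambda^j=\binom{A+B}{m}\,{}_2F_1\!\left(-m,-B,-(A+B);1-\lambda\right),
\end{equation*}
which can be proved directly in one line by writing $(1+\lambda x)^B=[(1+x)-(1-\lambda)x]^B$, applying the binomial theorem, extracting the coefficient of $x^m$ from $(1+x)^A(1+\lambda x)^B$, and converting the resulting binomial coefficients into Pochhammer symbols. Crucially, $A+B=2n+h+2-b$ is independent of $i$, so substituting back with $m=a-i_2$, $-B=b-i-n-h-2$ and $-(A+B)=b-2n-h-2$ yields exactly the ${}_2F_1$ appearing in the claim, multiplied by $\binom{2n+h+2-b}{a-i_2}/\lambda^{a-i_2}$.

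Reassembling the outer sums over $i_2$ and $i$ and restoring the prefactors $(-1)^a\lambda^{n-1}2^b/2^{2n-1}$ and $\binom{h}{i_2}2^{a-i_2}$ then delivers the stated expression. There is no real obstacle in this proof beyond bookkeeping: the only nontrivial step is the Pfaff-type identity, and the main point is to recognize that because $A+B$ does not involve $i$, the $i$-dependence can be relegated entirely to the parameter $-B$ inside the hypergeometric function, which is exactly what the statement asserts.
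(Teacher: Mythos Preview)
Your argument is correct and follows the same overall structure as the paper's proof: first collapse the sum over $i_1,i_3,i_5$ by Chu--Vandermonde to obtain $\binom{n+h+2-b+i}{a-i_2-i_4}$, then evaluate the remaining $i_4$-sum as a Gauss hypergeometric function. The only difference is in this last step: the paper first writes the $i_4$-sum as $\binom{n+h+2-(b-i)}{a-i_2}\,{}_2F_1\bigl(i_2-a,\,i-n,\,n+h+3-(b-i)+i_2-a;\,1/\lambda\bigr)$ and then invokes a documented argument transformation (Vidunas, eq.~22) to pass from $1/\lambda$ to $1-\lambda$, whereas your generating-function trick $(1+\lambda x)^B=[(1+x)-(1-\lambda)x]^B$ proves the needed Pfaff-type identity directly and lands at $1-\lambda$ in one move. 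Your route is slightly more self-contained; the paper's route makes the connection to standard ${}_2F_1$ transformations explicit.
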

\begin{proof}
We split the inner sum as 
\begin{equation*}
\sum_{i_2+i_4 = 0}^a\binom{h}{i_2} \binom{n-i}{i_4} \frac{2^{a-i_2}}{\lambda^{i_4}} \sum_{i_1+i_3+i_5 = a-i_2-i_4}  \binom{2}{i_1} \binom{h}{i_3}\binom{n -(b-i)}{i_5}
\end{equation*}
and use the Chu-Vandermonde identity to get: 
\begin{multline*}
A_{a,b}^{(n, h, \lambda, 1/2)} =\frac{(-1)^a \lambda^{n-1}2^b}{2^{2n-1}}\sum _{i=0}^b \binom{n}{i} \binom{n}{b-i} \frac{1}{\lambda^i}
\sum_{i_2+i_4 = 0}^a\binom{h}{i_2} \binom{n-i}{i_4} \binom{n+h+2-(b-i)}{a-i_2-i_4}\frac{2^{a-i_2}}{\lambda^{i_4}}.
\end{multline*}
Next we focus on the sum 
\begin{equation*}
\sum_{i_4 = 0}^{a-i_2}\binom{n-i}{i_4} \binom{n+h+2-(b-i)}{a-i_2-i_4}\frac{1}{\lambda^{i_4}}
\end{equation*}
for fixed $0 \leq i_2 \leq a, 0 \leq i \leq b$. Using the identity, 
\begin{equation*}
\binom{N}{j} = (-1)^j \frac{(-N)_j}{j!}, \quad 0 \leq j \leq N, 
\end{equation*}
we get 
\begin{align*}
\sum_{i_4 = 0}^{a-i_2}\binom{n-i}{i_4} \binom{n+h+2-(b-i)}{a-i_2-i_4}\frac{1}{\lambda^{i_4}} = \binom{n+h+2-(b-i)}{a-i_2}{}_2F_1\left(i_2-a, i-n, n+h+3-(b-i)+i_2-a; \frac{1}{\lambda}\right).
\end{align*}
Appealing to the argument transformation for terminating Gauss hypergeometric functions (see e.g. \cite{Vid}, eq. 22): 
\begin{equation*}
{}_2F_1(-N, b,c; z) = \frac{(c-b)_N}{(c)_N} z^N{}_2F_1 \left(-N, 1-c-N, b-c-N+1; 1-\frac{1}{z}\right),
\end{equation*}
we can further write 
\begin{multline*}
\sum_{i_4 = 0}^{a-i_2}\binom{n-i}{i_4} \binom{n+h+2-(b-i)}{a-i_2-i_4}\frac{1}{\lambda^{i_4}} = \binom{2n+h+2-b}{a-i_2}\frac{1}{\lambda^{a-i_2}} 
\\ {}_2F_1\left(i_2-a, b-i-n-h-2, b-2n-h-2; 1-\lambda\right)
\end{multline*}
As a result, 
\begin{multline*}
A_{a,b}^{(n, h, \lambda, 1/2)} =\frac{(-1)^a \lambda^{n-1}2^b}{2^{2n-1}}\sum _{i=0}^b \binom{n}{i} \binom{n}{b-i} \frac{1}{\lambda^i}
\\ \sum_{i_2=0}^{a}\frac{2^{a-i_2}}{\lambda^{a-i_2}} \binom{h}{i_2} \binom{2n+h+2-b}{a-i_2}{}_2F_1\left(i_2-a, b-i-n-h-2, b-2n-h-2; 1-\lambda\right)
\end{multline*}
whence the proposition follows after inverting the order of summation. 
\end{proof}
By the virtue of this proposition, the two-parameters relation \eqref{General-sys}
\begin{equation*}
Z_{a,b}^{(n, h, \lambda, \theta)}=A_{a,b}^{(n, h, \lambda, \theta)} -\sum_{\substack{0\le u\le a,\ 0\le v\le b\\0\le u+v\le a+b-1}} Z_{u,v}^{(n, h, \lambda, \theta)}  B_{a-u,b-v}^{(n, h, \theta)},
\end{equation*}
is given explicitly by
\begin{multline}\label{theta=1/2}
\sum_{u=0}^a 2^{a-u}(-1)^u\sum_{v=0}^{b}\binom{n+h+1}{b-v}\binom{n+2h+2-b+v}{a-u} Z_{u,v}^{(n, h, \lambda,1/2)} =  \frac{\lambda^{n-1}2^b}{2^{2n-1}}
\sum_{u=0}^{a}\frac{2^{a-u}}{\lambda^{a-u}} \binom{h}{u} \binom{2n+h+2-b}{a-u}
\\ \sum _{i=0}^b \binom{n}{i} \binom{n}{b-i} \frac{1}{\lambda^i}{}_2F_1\left(u-a, b-i-n-h-2, b-2n-h-2; 1-\lambda\right).
\end{multline}

\end{document}